\documentclass[a4paper]{article}

\usepackage{
amsmath,
amsthm,
amscd,
amssymb,
}
\usepackage{comment}
\usepackage{tikz}
\usetikzlibrary{positioning,arrows,calc}
\usepackage{xspace}

\usepackage{graphicx}
\usepackage{subcaption}
\usepackage{hyperref}
\usepackage{mathtools}
\usepackage{wasysym}
\hypersetup{
	colorlinks = true, %
	urlcolor = blue, %
	linkcolor = red, %
	citecolor = blue %
}
\usepackage{url}
\usepackage{ifthen}

\makeatletter
\newtheorem*{rep@theorem}{\rep@title}
\newcommand{\newreptheorem}[2]{%
\newenvironment{rep#1}[1]{%
 \def\rep@title{#2~\ref{##1}}%
 \begin{rep@theorem}}%
 {\end{rep@theorem}}}
\makeatother

\theoremstyle{plain}
\newtheorem{lemma}{Lemma}[section]
\newtheorem*{lemma*}{Lemma}
\newtheorem{claim}{Claim}[section]
\newtheorem{definition}{Definition}
\newtheorem{corollary}{Corollary}[section]
\newtheorem{proposition}{Proposition}
\newreptheorem{proposition}{Proposition}
\newtheorem{theorem}{Theorem}[section]
\newreptheorem{theorem}{Theorem}
\newtheorem*{theorem*}{Theorem}
\newtheorem*{proposition*}{Proposition}

\newtheorem*{conjecture*}{Conjecture}
\newtheorem{question}{Question}
\newtheorem{remark}{Remark}

\newtheoremstyle{derp}
{3pt}
{3pt}
{}
{}
{\upshape}
{:}
{.5em}
{}
\theoremstyle{derp}
\newtheorem{example}{Example}

\newcommand{\symb}[1]{\mathtt{#1}}

\newcommand{\Z}{\mathbb{Z}}

\newcommand{\N}{\mathbb{N}}

\newcommand{\cP}{\mathcal{P}}

\newcommand{\lk}{\mathrm{link}}

\newcommand{\bb}{\mathbf{b}}
\newcommand{\br}{\gre{\mathbf{r}}}
\newcommand{\bc}{\blu{\mathbf{c}}}

\newcommand{\bP}{\mathbf{P}}

\newcommand\xqed[1]{%
  \leavevmode\unskip\penalty9999 \hbox{}\nobreak\hfill
  \quad\hbox{#1}}
\newcommand\qee{\xqed{$\fullmoon$}}

\newcommand{\col}{\mathrm{col}}

\newcommand{\tail}{\mathrm{tail}}

\newcommand{\gre}[1]{\textcolor{orange}{#1}}
\newcommand{\blu}[1]{\textcolor{blue}{#1}}
\newcommand{\bB}{\mathbf{B}}
\newcommand{\bS}{\mathbf{S}}
\newcommand{\bD}{\mathbf{D}}
\newcommand{\bL}{\mathbf{L}}
\newcommand{\bI}{\mathbf{I}}
\newcommand{\bT}{\mathbf{T}}

\newcommand{\bseed}{\mathbf{seed}}

\newcommand{\cE}{\mathcal{E}}
\newcommand{\cV}{\mathcal{V}}

\newcommand\vartextvisiblespace[1][.3em]{%
  \mbox{\kern.1em\vrule height.3ex}%
  \vbox{\hrule width#1}%
  \hbox{\vrule height.3ex}
}

\newcommand{\cycle}[2]{
\begin{tikzpicture}[scale=#2]
    \def\r{1.5}    

    \foreach \i in {1,...,#1} {
        \node[circle, fill=black, inner sep=1.5pt] 
              (v\i) at ({90-(\i-1)*360/#1}:\r) {};
    }

    \foreach \i [evaluate=\i as \j using {int(mod(\i,#1)+1)}] in {1,...,#1} {
        \draw (v\i) -- (v\j);
    }
\end{tikzpicture}
}

\newcommand{\cocycle}[2]{
\begin{tikzpicture}[scale=#2]
    \def\r{1.5}    

    \foreach \i in {1,...,#1} {
        \node[circle, fill, inner sep=1.5pt] 
              (v\i) at ({90-(\i-1)*360/#1}:\r) {};
    }

    \foreach \i in {1,...,#1} {
		\pgfmathtruncatemacro{\startj}{\i+2}
		\pgfmathtruncatemacro{\sodij}{#1+1}
		\ifthenelse{\startj < \sodij}{
			\foreach \j in {\startj,...,#1} {
			\ifnum\i=1
            \ifnum\j=#1
            \else
                \draw (v\i) -- (v\j);
            \fi
        \else
            \draw (v\i) -- (v\j);
        \fi
			}}{}
	}
\end{tikzpicture}
}

\newcommand{\copath}[2]{
\begin{tikzpicture}[scale=#2]
    \def\r{1.5}    

    \foreach \i in {1,...,#1} {
        \node[circle, fill, inner sep=1.5pt] 
              (v\i) at ({90-(\i-1)*360/#1}:\r) {};
    }

    \foreach \i in {1,...,#1} {
		\pgfmathtruncatemacro{\startj}{\i+2}
		\pgfmathtruncatemacro{\sodij}{#1+1}
		\ifthenelse{\startj < \sodij}{
			\foreach \j in {\startj,...,#1} {
			
            \draw (v\i) -- (v\j);
			}}{}
	}
\end{tikzpicture}
}

\newcommand{\copathnumbers}[2]{
\begin{tikzpicture}[scale=#2]
    \def\r{1.5}    

    \foreach \i in {1,...,#1} {
        \node[circle, inner sep=1.5pt] 
              (v\i) at ({90-(\i-1)*360/#1}:\r) {\i};
    }

    \foreach \i in {1,...,#1} {
		\pgfmathtruncatemacro{\startj}{\i+2}
		\pgfmathtruncatemacro{\sodij}{#1+1}
		\ifthenelse{\startj < \sodij}{
			\foreach \j in {\startj,...,#1} {
			
            \draw (v\i) -- (v\j);
			}}{}
	}
\end{tikzpicture}
}

\newcommand{\petersen}{
\begin{tikzpicture}[scale=0.5]
    \foreach \i in {1,...,5} {
        \node[circle, fill, inner sep=1.5pt] 
              (v\i) at ({90-(\i-1)*360/5}:1.5) {};
		\pgfmathtruncatemacro{\kek}{\i+5}
\node[circle, fill, inner sep=1.5pt] 
              (u\i) at ({90-(\i-1)*360/5}:0.75) {};
    }

    \draw (v1) -- (v2) -- (v3) -- (v4) -- (v5) -- (v1);
        \draw (u1) -- (u3) -- (u5) -- (u2) -- (u4) -- (u1);
        \draw(u1)--(v1);
                \draw(u2)--(v2);
                        \draw(u3)--(v3);
                                \draw(u4)--(v4);
                                        \draw(u5)--(v5);
\end{tikzpicture}
}

\title{Self-simulability of graph products}

\author{
Kan\'eda Blot \and Ville Salo
}

\begin{document}
\maketitle

\begin{abstract}
A group is self-simulable if all its computable actions admit SFT covers, which means roughly that they can be implemented with finitely many tiling constraints. We prove that a graph product of infinite finitely-generated groups is self-simulable if and only if its defining graph has no disconnecting clique consisting of amenable groups. In particular, a right-angled Artin group (a.k.a.\ a graph group) is self-simulable if and only if the defining graph has no disconnecting clique. As an application, we obtain that a graph product of infinite finitely-generated groups splits (algebraically, or in a certain geometric sense) over an amenable subgroup if and only if the graph has a disconnecting clique consisting of amenable groups.
\end{abstract}

\section{Introduction}

Let $\Gamma$ be a group, and $A$ a finite set, called the \emph{alphabet}. The main object of study in the field of symbolic dynamics is the \emph{subshift}, namely a topologically closed and $\Gamma$-invariant subsystem of the translation action $\Gamma \curvearrowright A^\Gamma$. Our convention for the translation action is that a group $\Gamma$ acts on the left on a subshift $\mathcal S \subset A^\Gamma$ by $\Gamma \times \mathcal S \ni (g,c) \mapsto (h \mapsto c(g^{-1}h))$.

An interesting class of subshifts are \emph{subshifts of finite type} or \emph{SFTs}, obtained by removing from $A^\Gamma$ those points whose $\Gamma$-orbit intersects a clopen set (this can be seen as ``forbidding finitely many patterns'').

A factor map (surjective shift-equivariant continuous map) from an SFT to a $\Gamma$-system is known as an \emph{SFT cover}, and we call systems having an SFT cover \emph{SFT covered}. Finding SFT covers is a standard method of studying hyperbolic toral automorphisms \cite{Bo73,LiMa95} as well as the boundaries of hyperbolic groups \cite{CoPa06}. SFT covered subshifts are called \emph{sofic shifts} \cite{We73a}, and they are in themselves an interesting class of dynamical systems.

An SFT cover can be seen as a form of finite presentation of the system. Namely, an SFT is fully described by a clopen set which can be described by a finite amount of data if $\Gamma$ is finitely-generated. In many cases also the factor map admits a finite description. In particular, this happens in the case of sofic shifts, where the Curtis-Hedlund-Lyndon theorem \cite{CeCo10} provides a combinatorial characterization of factor maps. More generally finite descriptions of the factor map exist for all expansive factors \cite{Fr87}, and for many non-expansive ones \cite{KoSa21}.

SFT covered systems are typically a large class of dynamical systems. Hochman showed in \cite{Ho09} a strong theorem, which we state in simplified form:

\begin{theorem*}[Hochman]
\label{th:Hochman}
Let $\Z^d \curvearrowright X$ be any effective action, where $X$ is an effectively closed subset of Cantor space. Then the $\Z^{d+2}$-system on $X$ where the two new generators act trivially, is SFT covered.
\end{theorem*}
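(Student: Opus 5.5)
This is Hochman's theorem, which the paper quotes rather than proves. The natural route is Hochman's simulation argument.

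Fix an effective action $\Z^d \curvearrowright X$, with $X \subset \{0,1\}^{\N}$ effectively closed. By compactness and effectivity, this action can be encoded as a subshift over $\Z^d$. Each point $x \in X$ is represented by a configuration in which every $v \in \Z^d$ carries (a symbolic encoding of) $v\cdot x$, written along an extra coordinate direction.

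The plan is then to build a $\Z^{d+2}$ SFT in three stages.

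\begin{enumerate}
\item \textbf{Broadcasting the point.} Along one of the two new directions, copies of the encoding of $v\cdot x$ are laid out as infinite binary sequences. They are required to be constant along the other new direction. Local rules force the sequence at $v+e_i$ to be the image of the sequence at $v$ under the $i$-th generator. This is checked by computation, since the generators act by computable maps that are approximated with finitely many bits at each scale.

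\item \textbf{Verifying membership and consistency.} In the two-dimensional planes spanned by the new coordinates, we embed a hierarchical self-similar Turing-machine tiling of Robinson/Mozes type, as in Durand--Romashchenko--Shen or Aubrun--Sablik. It performs computations of unbounded size. These computations do two things:
\begin{itemize}
\item[(i)] enumerate the forbidden cylinders of $X$ and reject any configuration whose encoded sequence contains one;
\item[(ii)] check, to increasing precision, that the encoded sequences at neighbouring $\Z^d$-positions are related by the generator maps.
\end{itemize}
Because both new generators must act trivially on the factor, the information is made constant in those directions. The factor map just reads the broadcast sequence at the origin's column.

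\item \textbf{Surjectivity and equivariance.} Every $x \in X$ must lift. For this we show that, for each $x$, there is a valid computation layer compatible with the data. We also check that the factor map is continuous and commutes with the action, with the new generators mapping to the identity.
\end{enumerate}

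The main obstacle is stage 2(ii): verifying the dynamics across the $\Z^d$ direction. The computation zones live in 2D planes, while the consistency constraints link different planes. I would first try having each plane simulate, for increasingly large finite windows of $\Z^d$ neighbours, the required relations on finitely many bits. The Turing machine accesses the neighbouring planes' data through synchronized hierarchical structure: the same tiling is copied identically across the $\Z^d$ directions by local rules, so the computation regions align. Only finitely many bits must be compared at each level, which keeps the constraints finite.
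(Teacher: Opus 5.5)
The paper does not prove this theorem; it quotes it from \cite{Ho09}. Your outline therefore has to stand on its own. Its overall architecture is reasonable, but there is a genuine gap in the first step: how a point of $\{0,1\}^\N$ is stored in a plane, and hence in the factor map.

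You lay the sequence out along the new direction $e_{d+1}$ (bit $n$ on the $n$-th site of a row), constant along $e_{d+2}$, and read it off at the origin. Translating the configuration one step along $e_{d+1}$ (in the appropriate direction) then replaces the sequence read at the origin by its shift. So the factor map can be invariant under that generator only if every point of $X$ is a constant sequence. That invariance is exactly what ``the two new generators act trivially'' requires. Your remark that the information is ``made constant in those directions'' does not repair this: over a finite alphabet, a configuration constant in both new directions stores only finitely many bits per site of $\Z^d$.

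What is missing is a \emph{translation-invariant} encoding of $\{0,1\}^\N$ with a continuous, translation-invariant decoding map. One standard choice is a Toeplitz (2-adic) encoding along $e_{d+1}$. For some phase, the sites of level $n$ in a row form a coset of $2^{n+1}\Z$, and all of them carry the bit $x_n$. An auxiliary finite-alphabet layer records the odometer structure, so that sites of level $n$ can be recognised from windows of size $O(2^n)$. (In hierarchical language: every level-$n$ macrotile carries $x_n$.) Decoding bit $n$ then means finding a nearby site of level $n$. This is continuous and insensitive to the phase, hence to translations along $e_{d+1}$. Moreover, every window of length $2^n$ contains sites of all levels below $n$, so computation zones of that size can read a prefix of the encoded sequence.

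With such an encoding, the step you single out as the main obstacle, 2(ii), is routine and needs no interaction between computations in different planes. Only the level skeleton should be synchronized across the $\Z^d$ directions. The computation contents cannot be copied identically, as your last paragraph suggests, because the plane $\{v\}\times\Z^2$ must verify a different point than the plane $\{0\}\times\Z^2$. A local rule then copies the encoded sequence of each neighbouring plane $\{v+e_i\}\times\Z^2$ into an auxiliary layer of $\{v\}\times\Z^2$. Each plane only has to check that one tuple of sequences lies in a fixed effectively closed set, namely the graph of the generators restricted to $X$. This is exactly the device of the set representation in Definition~\ref{def:SetRepresentation}, and a Durand--Romashchenko--Shen or Aubrun--Sablik computation in the plane does this check.

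Alternatively, the phase-synchronized Toeplitz encodings of orbits form an effective $\Z^{d+1}$ subshift, and the one-extra-dimension theorem for effective subshifts \cite{DuRoSh10,AuSa13} supplies the last coordinate. This also accounts for the dimension count. One new dimension is spent on the invariant encoding and the other on computation, and by Jeandel's example a single extra dimension does not suffice in general.
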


Here an \emph{effectively closed} subset of Cantor space $\{0,1\}^\omega$ refers to a subset obtained by removing a countable set of cylinders enumerated by a Turing machine (that is, a $\Pi_1^0$ set), and an \emph{effective action} is one where each group element acts by a computable homeomorphism.

By an example of Jeandel (see \cite{BaSaSa26}), the statement of Hochman's theorem fails to hold if $\Z^{d+2}$ is replaced by $\Z^{d+1}$.\footnote{If we restrict $\Z^d \curvearrowright X$ to itself be a subshift, then it does hold \cite{DuRoSh10,AuSa13}.}

Hochman's theorem suggests the following definition:

\begin{definition}
Suppose $\phi : \Gamma \to \Delta$ is a surjective group homomorphism. To every $\Delta$-system, we may associate its \emph{pullback (along $\phi$)}, namely the $\Gamma$-system by defining $g \cdot x = \phi(g) \cdot x$. We say that $\Gamma$ \emph{simulates $\Delta$ (through $\phi$)} if every pullback of an effective $\Delta$-system along $\phi$ is SFT covered.
\end{definition}

Plenty of examples of this phenomenon are known, for $\Gamma, \Gamma_1, \Gamma_2, \Gamma_3, \Delta$ infinite finitely-generated groups with decidable word problem, $\Delta$ non-amenable, and the maps $\phi$ the obvious ones:
\begin{itemize}
\item $\Z^{d+2}$ simulates $\Z^d$ \cite{Ho09} (the theorem above),
\item $\Z^d \rtimes \Gamma$ for $d \geq 2$ simulates $\Gamma$ for any semidirect product \cite{BaSa19},
\item $\Gamma_1 \times \Gamma_2 \times \Gamma_3$ simulates each of the $\Gamma_i$ \cite{Ba19},
\item $\Gamma_1 \times \Delta$ simulates (the non-amenable group) $\Delta$ \cite{BaSaSa25},
\item $\Z_2 \wr \Z$ simulates $\Z$ \cite{BaSa26}.
\end{itemize}

Simulation theorems have many consequences. For example, they typically allow the construction of strongly aperiodic subshifts of finite type (i.e.\ SFTs where every orbit is free). The construction of such SFTs is a common theme in the field, starting with the classical construction of Berger on $\Z^2$ \cite{Be66}. For several groups (including Thompson's $V$ \cite{BaSaSa26} and the Grigorchuk group \cite{Ba19}), the only known constructions of strongly aperiodic SFTs come from simulation theorems. On amenable groups, simulation theorems can also be used to obtain SFTs with arbitrary $\Pi^0_1$ entropies (this requires some control on the fibers in the simulation, but it does happen in the simulations above).

An interesting case is when a group simulates itself (through the identity map), called \emph{self-simulability}. In \cite{BaSaSa26}, it was shown that for finitely-generated groups with decidable word problem, self-simulability is equivalent to all effective subshifts on the group being sofic. The first examples were also given of self-simulable groups in \cite{BaSaSa26}. For example, the following groups are self-simulable
\begin{itemize}
\item $\Gamma \times \Delta$ where both $\Gamma, \Delta$ are f.g.\ non-amenable groups,
\item Thompson's $V$,
\item $\mathrm{GL}(n, \Z)$ for $n \geq 5$,
\item braid groups with at least $7$ braids,
\item all non-amenable branch groups,
\item certain right-angled Artin groups.
\end{itemize}

The case $\Gamma = \Delta = F_2$ of the first item is the prototypical example of a self-simulable group, and the other items are deduced from this. The results of the present paper generalize the first item by providing an almost complete characterization of the graph products of infinite groups that are self-simulable. In particular, we characterize self-simulable right-angled Artin groups. The results of this paper, however, do not follow from a direct application of the results in \cite{BaSaSa26} and require a new construction.

\subsection{New results on self-simulability of graph products}

See Section~\ref{sec:GraphProducts} for the precise definitions of a graph product of groups and a right-angled Artin group. Briefly, in a graph product of groups, we have a finite graph whose edges determined the commutation relations of subgroups assigned to the vertices, and a right-angled Artin group is a graph product $\Z$s. See Section~\ref{sec:SelfSimulability} for the definition of strong self-simulability; however, in the case of recursively presented groups, this coincides with the notion self-simulability.

We obtain the following criterion for strong self-simulability of graph products.

Let us say a set of nodes $C \subset V$ in a graph $(V, E)$ is \emph{disconnecting} if the number of connected components in $(V \setminus C, E \cap (V \setminus C)^2)$ is not equal to $1$. Thus, $C$ is disconnecting if either $C = V$, or there exist $a, b \notin C$ such that there is no path from $a$ to $b$ that does not enter $C$. (It may not seem natural to include the case $C = V$, but this is the more convenient choice for us.) 

\begin{theorem}
\label{th:GraphProducts}
Let $G = (V, E, (\Gamma_u)_{u \in V})$ be a finite graph every node of which is assigned a finitely-generated and infinite group $\Gamma_u$.  
Assume $G$ is not a clique with exactly one non-amenable node. Then the following conditions are equivalent:

\begin{enumerate}
\item $G$ has no disconnecting amenable clique.
\item The graph product $\Gamma(G)$ is strongly self-simulable.
\end{enumerate}

\end{theorem}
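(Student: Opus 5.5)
The proof splits into the two implications; the hard one is (1)$\Rightarrow$(2).

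For (2)$\Rightarrow$(1), suppose $C$ is a disconnecting amenable clique. Then $\Gamma(G)$ splits as an amalgamated product
\[ \Gamma(G) = \Gamma(G_1) *_{\Gamma(C)} \Gamma(G_2), \]
where $G_1$ is $C$ together with one component of $V\setminus C$, and $G_2$ is $C$ together with the remaining components. If $C = V$, then $\Gamma(G)$ is instead a direct product of the $\Gamma_u$. Since $C$ is a clique, $\Gamma(C)=\prod_{u\in C}\Gamma_u$, which is amenable.

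In the splitting case I would invoke the known obstruction from \cite{BaSaSa26}: a group acting on a tree with amenable edge stabilizers (nontrivially, with infinitely many ends of the appropriate type) is not self-simulable. The argument takes an effective subshift encoding an uncomputable or non-sofic constraint, for instance high-complexity configurations placed along the Bass--Serre tree. An SFT cover would then need to transmit unbounded information across amenable edge groups. Amenability bounds the boundary/volume ratio, so entropy-type counting across the separating cosets $g\Gamma(C)$ gives a contradiction.

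In the case $C=V$, $\Gamma(G)$ is a direct product of infinite groups, at most one of which is non-amenable. The hypothesis excludes exactly one non-amenable node, so all factors are amenable and $\Gamma(G)$ is amenable. Amenable groups are not self-simulable, by the entropy argument: sofic shifts have entropy bounded by that of an SFT, while there are effective subshifts with growth incompatible with it.

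For (1)$\Rightarrow$(2), the plan is to reduce to the prototypical fact that $F_2\times F_2$ simulates itself, together with the closure properties of simulation established in \cite{BaSaSa26}. Simulation is transitive, and a group that contains suitable "simulating" subgroups whose union generates it, with overlaps, is itself self-simulable.

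The key steps, in order, are as follows.
\begin{enumerate}
\item Show that if $u,v$ are adjacent and at least one of them is non-amenable, then $\Gamma_u\times\Gamma_v$ sits in $\Gamma(G)$ with enough non-amenability to run the $\Gamma_1\times\Delta$ simulation. If instead no clique disconnects, then for a non-edge $u\not\sim v$ the free product $\Gamma_u*\Gamma_v$ is non-amenable.
\item Use connectivity of $V\setminus C$ for every amenable clique $C$ to find, for each vertex, a chain of subgroups $\Gamma_{\{u\}\cup\mathrm{lk}}$-type products $A\times B$ with both $A$ and $B$ non-amenable. For example, $A=\Gamma_u*\Gamma_w$ with $u\not\sim w$, commuting with $B=\Gamma_x*\Gamma_y$ in the link.
\item Glue the self-simulations of these product subgroups along the graph, passing the simulated configuration from one subgroup to the next through common factors, in the style of the Thompson $V$ and RAAG arguments.
\end{enumerate}

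I expect the main obstacle to be step 2 combined with the gluing. Absence of disconnecting amenable cliques does not directly give a product of two non-amenable subgroups covering everything. I would first try induction on $|V|$: delete a vertex whose removal keeps the condition true, or handle minimal counterexamples such as cycles and cones. For cycles of length at least $5$ (and the like), where no $F_2\times F_2$-type subgroup covers a vertex's neighbourhood, I anticipate needing a genuinely new construction. There the simulation is propagated along the coset geometry of the graph product, using the normal form to make SFT rules transfer computation between overlapping $F_2\times F_2$ subgroups.
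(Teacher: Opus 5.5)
\textbf{The direction (2)$\Rightarrow$(1).} This is essentially the paper's Corollary~\ref{cor:AmenableClique}. You write $\Gamma(G)=\Gamma(G_1)*_{\Gamma(C)}\Gamma(G_2)$, where $\Gamma(C)$ is amenable and properly embedded because each $G_i\neq C$ contains an extra infinite vertex group.

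The obstruction you need, however, is Lemma~\ref{lem:Cut}. The paper takes it from the UFO work \cite{BaBlSaSa25}, and it applies to \emph{strong} self-simulability with no word-problem hypothesis. \cite{BaSaSa26} only treats multi-ended groups and products of an amenable and a one-ended group. Your entropy heuristic also does not prove the obstruction: on $\Z^2$, SFTs and effective subshifts realize exactly the same entropies, yet $\Z^2$ is not self-simulable.

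\textbf{The direction (1)$\Rightarrow$(2).} This is where the genuine gap lies. You give a plan rather than a proof, and the plan fails on graphs the theorem covers.

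Step~1 yields no self-simulating pieces. A product $\Gamma_1\times\Delta$ with $\Gamma_1$ amenable is only known to simulate $\Delta$, i.e.\ to give covers of pullbacks along the projection. Whether such a product simulates itself is open (Question~\ref{q:Product}).

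Step~2 needs an induced square $u,x,w,y$ in $G$. $C_5$ and the Petersen graph have none, yet neither has a disconnecting clique. In fact, by Kambites' theorem together with the Tits alternative, their RAAGs contain no subgroup $A\times B$ with $A,B$ both non-amenable. So there is nothing to glue, and the ``genuinely new construction'' you defer is the entire content of the theorem. The paper also notes that $\overline{C_6}$, which does contain squares, is out of reach of the gluing results of \cite{BaSaSa26}.

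\textbf{The missing idea.} The paper uses a direct SFT construction that never uses non-amenable product subgroups. Here $I$ and $J$ denote the amenable and non-amenable vertices.

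First, each vertex group carries SFT-marked injective direction paths. When $\Gamma_v$ is amenable these come from Seward's translation-like $\Z$-actions \cite{Se14}; otherwise they come from two-coloured paradoxical path systems. They are pulled back to $\Gamma(G)$ through the retraction $\Gamma(G)\to\Gamma_v$. As a result, the $u$-paths from $g$ and from $ga$, for $a\in\Gamma_v$ with $v\in\lk(u)$, run in parallel.

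Second, at each $g$ one takes the bush $\bB(g)=(V\setminus\tail(g))\cup J$. Here $\tail(g)$ is the set of vertices that can carry the last syllable of a reduced writing of $g$, and it is a clique. Since $\tail(g)\cap I$ is an amenable clique, Lemma~\ref{lem:GraphTheory} makes $\bB(g)$ connected with at least two vertices. It also gives $\bB(g)\cap\bB(ga)\cap\lk(v)\neq\emptyset$ for $a\in S_v$.

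Third, two adjacent directions of $\bB(g)$ span a quarter-plane. On it, $g$ runs a Wang-tile computation checking that its stored point lies in the set representation $Y$. The point is copied along diagonals through the connected bush. It is matched with the point stored at $ga$ along a shared direction commuting with $a$.

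Finally, bushes with the same direction set at distinct points do not overlap. Indeed, $g$ is the only element of $g\langle\bB(g)\cap I\rangle$ whose tail avoids $\bB(g)\cap I$, and non-amenable paths from distinct points are disjoint. So each $g$ has private computation space. This handles $C_5$ and the Petersen graph exactly as it handles $C_4$.
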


Note that the previously known examples of self-simulable groups stem from the simple case where $G$ is a clique on two non-amenable nodes.  

The proof of self-simulability of the groups generalizes the proof of self-simulability of $F_2 \times F_2$ by using the normal form for graph products to find suitable ``directions'' where information should be stored.

The case of a clique with a single non-amenable node of course is not characterized by the same condition. For instance, when $G$ has just one vertex $u$, whose vertex group $\Gamma_u$ is non-amenable, we have $\Gamma(G) \cong \Gamma_u$, and of course the self-simulability is not dictated by the graph (but instead by whether $\Gamma_u$ is self-simulable).

Even if we know the self-simulation status of the vertex groups $\Gamma_u$, giving a full characterization of self-simulability of graph products would require solving the following question from \cite{BaSaSa26}:

\begin{question}
\label{q:Product}
Let $\Gamma, \Delta$ be finitely-generated groups. If $\Gamma \times \Delta$ is (strongly) self-simulable and $\Delta$ is amenable, is $\Gamma$ necessarily (strongly) self-simulable?
\end{question}


The assumption that the node groups are infinite is also essential for the method, and the general case stays wide open.

As a corollary, we obtain a complete characterization of strongly self-simulable RAAGs.

The result in \cite{BaSaSa26} about RAAGs is the following:

\begin{theorem*}
Suppose $G = (V, E)$ is a finite connected graph which has two edges
with the property that no $v \in V$ is adjacent to both of them. Then the RAAG corresponding to the complement graph of $(V, E)$ is self-simulable.
\end{theorem*}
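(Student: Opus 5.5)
The statement at the end is the theorem from \cite{BaSaSa26} about RAAGs. It should follow from Theorem~\ref{th:GraphProducts}, so the plan is to check that its hypotheses imply condition~1 there.

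First we set up the translation. Let $G=(V,E)$ be a finite connected graph with two edges $e_1=\{a,b\}$ and $e_2=\{c,d\}$ such that no vertex is adjacent to both. Let $\bar G=(V,\bar E)$ be the complement graph. The RAAG in question is the graph product over $\bar G$ in which every vertex group is $\Z$. Each such group is infinite, finitely generated and amenable.

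Next we dispose of the excluded case of Theorem~\ref{th:GraphProducts}. A clique with exactly one non-amenable node is impossible here, since every node is amenable. So the theorem applies, and it suffices to show that $\bar G$ has no disconnecting clique. The strong self-simulability this yields coincides with self-simulability, because RAAGs are finitely presented, hence recursively presented.

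Now fix a clique $C$ of $\bar G$; equivalently, $C$ is an independent set in $G$. We must show that $\bar G\setminus C$ is nonempty and connected.

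For connectivity, take $x,y\notin C$ and suppose they are non-adjacent in $\bar G$, i.e.\ $xy\in E$. We look for a common $\bar G$-neighbour outside $C$, meaning a vertex $z\notin C\cup\{x,y\}$ adjacent in $G$ to neither $x$ nor $y$.

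If $\{x,y\}$ is disjoint from some $e_i$, we argue as follows. Since $C$ is independent in $G$, at most one endpoint of $e_i$ lies in $C$; pick an endpoint $z\notin C$. If $z$ were $G$-adjacent to $x$, then $x$ would be adjacent to an endpoint of $e_i$. This is not yet a contradiction, because the hypothesis concerns vertices adjacent to both edges, and what we need is a different statement.

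So we reformulate directly. $\bar G\setminus C$ is disconnected exactly when $V\setminus C$ splits as $A\sqcup B$ with $A,B$ nonempty and every $A$–$B$ pair forming an edge of $G$. In that case $G[V\setminus C]$ contains a complete bipartite join. Each $e_i$ has an endpoint outside $C$, say $u_i\notin C$.

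Case $u_1,u_2$ on the same side, say $A$: any $w\in B$ is adjacent to both $u_1$ and $u_2$. We want to upgrade this to $w$ being adjacent to both edges $e_1$ and $e_2$, contradicting the hypothesis, provided "adjacent to an edge" is read as "adjacent to an endpoint" (this interpretation needs checking against \cite{BaSaSa26}).

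Case $u_1\in A$, $u_2\in B$: then $u_1u_2\in E$. We use connectivity of $G$ and independence of $C$ to produce a vertex adjacent to both edges. For instance, $u_1$ itself lies in $e_1$ and is adjacent to $u_2\in e_2$. Whether that counts depends on the precise definition of ``adjacent to an edge'' in \cite{BaSaSa26}.

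Nonemptiness: $C\neq V$ since $V$ contains an edge of $G$ and $C$ is independent.

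The main obstacle is pinning down the exact meaning of the hypothesis in \cite{BaSaSa26}. The likely reading is that the edges are disjoint and no vertex is joined to an endpoint of both. The case analysis above then has to be matched against that definition. The second case is the delicate one, and I expect connectivity of $G$ to be needed exactly there.
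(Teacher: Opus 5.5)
Your reduction to Theorem~\ref{th:RAAGs} (equivalently Theorem~\ref{th:GraphProducts} with all vertex groups $\Z$) is the right one, and your case analysis already amounts to a complete proof. What is missing is a commitment to the meaning of the hypothesis. Your expectation that Case~2 is delicate and needs connectivity of $G$ is mistaken.

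The reading is forced. A vertex $v$ is adjacent to an edge $e$ when $v$ is adjacent to at least one endpoint of $e$, with $v\in e$ allowed. So the hypothesis says $d_G(x,y)\geq 3$ for all $x\in e_1$ and $y\in e_2$. This is exactly how the criterion is quoted in the caption of Figure~\ref{fig:sub7} ($d(\{1,2\},\{5,6\})>2$). The two alternative readings would make the statement false. These are "adjacent to both endpoints" and "adjacent to an endpoint, with $v\notin e$ required". Take the path $P_4$ on $a,b,c,d$ (in this order) with $e_1=\{a,b\}$ and $e_2=\{c,d\}$: it satisfies both alternative hypotheses. Yet $\overline{P_4}\cong P_4$ is a tree, whose RAAG is not self-simulable by Example~\ref{ex:Cycle} and Corollary~\ref{cor:AmenableClique}.

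With this reading both of your cases close at once.
\begin{itemize}
\item In Case~1, any $w\in B$ is $G$-adjacent to $u_1\in e_1$ and to $u_2\in e_2$, so $d_G(u_1,u_2)\leq 2$.
\item In Case~2, $u_1u_2\in E$ gives $d_G(u_1,u_2)=1$. Indeed $u_1$ is adjacent to its partner in $e_1$ and to $u_2\in e_2$, which your own remark already observes.
\end{itemize}
Together with $C\neq V$, this shows that $\overline{G}$ has no disconnecting clique, and Theorem~\ref{th:RAAGs} gives self-simulability.

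Connectivity of $G$ is never used, so this route shows that hypothesis is superfluous. For disconnected $G$, $\overline{G}$ is a join and nothing in the argument changes. You should also delete the abandoned common-neighbour attempt.

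For comparison: the paper does not prove this statement itself. It is quoted from \cite{BaSaSa26}, where it was obtained by other means, and is only presented as subsumed by Theorem~\ref{th:RAAGs}. Your argument is precisely the verification that justifies that claim.
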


Question~9.8 of the same paper leaves open the full characterization of such RAAGs. In the case of RAAGs, we get a complete characterization as a consequence of Theorem~\ref{th:GraphProducts}.

\begin{theorem}
\label{th:RAAGs}
Let $G = (V, E)$ a finite graph. The following conditions are equivalent.
\begin{enumerate}
\item $G$ has no disconnecting clique. 
\item The right-angled Artin group $\Gamma_\Z(G)$ is self-simulable.
\end{enumerate}
\end{theorem}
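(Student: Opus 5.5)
Theorem~\ref{th:RAAGs} will be derived from Theorem~\ref{th:GraphProducts} applied with every vertex group $\Gamma_u = \Z$. These groups are finitely generated, infinite and amenable, so the hypotheses of Theorem~\ref{th:GraphProducts} are met. A clique with exactly one non-amenable node cannot occur, since no node is non-amenable. Moreover, every clique is an amenable clique, so ``no disconnecting amenable clique'' is literally ``no disconnecting clique''.

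Theorem~\ref{th:GraphProducts} then says that condition (1) is equivalent to strong self-simulability of $\Gamma_\Z(G)$. It remains to replace strong self-simulability by self-simulability. A RAAG is finitely presented, hence recursively presented, and the paper notes that for recursively presented groups the two notions coincide. This closes the chain of equivalences.

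The main point to check is the degenerate cases. If $V = \emptyset$, the empty set is a clique, and by the convention that $C = V$ counts as disconnecting, the empty graph has a disconnecting clique. On the other side, the trivial group has only finite effective subshifts, and it should be confirmed (from Section~\ref{sec:SelfSimulability}) that the paper's definition makes the trivial group not self-simulable. Presumably Theorem~\ref{th:GraphProducts} already covers this case; if not, it can be handled by hand.

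The convention that $C = V$ is disconnecting also handles complete graphs, where $\Gamma_\Z(G) \cong \Z^n$. These are amenable, and amenable groups are not self-simulable: there are effective subshifts of entropy not right-recursively enumerable that cannot be sofic. This matches the fact that $V$ itself is then a disconnecting clique. Both edge cases are consistent with the general theorem, so the statement follows immediately.
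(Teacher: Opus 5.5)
Your proposal is correct and follows the paper's proof: specialize Theorem~\ref{th:GraphProducts} to $\Gamma_u = \Z$ (all nodes are infinite and amenable, so every clique is amenable and the excluded case cannot occur), then use that strong self-simulability and self-simulability coincide for recursively presented groups. One side remark is wrong, although it is not needed for the argument: effective $\Z^d$-subshifts always have right-recursively-enumerable entropy, so that is not why $\Z^n$ fails to be self-simulable; the actual reason is that the effective shift action of $\Z^n$ on $(\{0,1\}^{\N})^{\Z^n}$ has infinite entropy and so cannot be a factor of an SFT (cf.\ the results recalled in Section~\ref{sec:SelfSimulability}), and likewise the trivial group fails because its SFTs are finite and so cannot cover the trivial action on Cantor space.
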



Again, we recall that we consider a clique graph to be a disconnecting clique in itself.

It is shown in \cite{Wh81,Ta85} that the existence of a disconnecting clique can be determined in polynomial time. Thus, our theorem reduces the question of self-simulability to an easily checkable purely graph-theoretic condition.

\begin{example}
\label{ex:Cycle}
RAAGs of trees are never self-simulable as they are either cliques (with at most two nodes), or they are disconnected by any one of their inner vertices. RAAGs of cycles of length at least 4 are always self-simulable as any disconnecting vertex set must contain two non-adjacent vertices. Some more examples of applying this theorem can be found in Figure~\ref{fig:Examples}. \qee
\end{example}

\begin{figure}[htbp]
    \centering

    \begin{subfigure}[b]{0.3\textwidth}
        \centering
        \cycle{3}{0.5}
        \caption{$\Gamma_\Z(C_3) \cong \Z^3$ is amenable, so not self-simulable by \cite{BaSaSa26}. $C = C_3$ a disconnecting clique.}
        \label{fig:sub1}
    \end{subfigure}
    \hfill
    \begin{subfigure}[b]{0.3\textwidth}
        \centering
        \cycle{4}{0.5}
        \caption{$\Gamma_\Z(C_4) \cong F_2 \times F_2$ is self-simulable \cite{BaSaSa26}. There is no disconnecting clique.}
        \label{fig:sub2}
    \end{subfigure}    
    \hfill
    \begin{subfigure}[b]{0.3\textwidth}
        \centering
        \cycle{5}{0.5}
        \caption{$C_5$ has no disconnecting clique so the RAAG is self-simulable. (This case was open.)}
        \label{fig:sub3}
    \end{subfigure}
    \hfill
    \begin{subfigure}[b]{0.3\textwidth}
        \centering
        \begin{tikzpicture}[scale=0.5]
        \node[circle, inner sep=1.5pt] 
              (v1) at (0,0) {1};
        \node[circle, inner sep=1.5pt] 
              (v2) at (1.6,0) {2};
        \node[circle, inner sep=1.5pt] 
              (v3) at (3.2,0) {3};
	   \draw (v1) -- (v2) -- (v3);
        \end{tikzpicture}

        \caption{$\Gamma_\Z(P_3) \cong F_2 \times \Z$ is a product of a multi-ended group and an amenable group, so not self-simulable \cite{BaSaSa26}; $\{2\}$ is a disconnecting clique.}
        \label{fig:sub4}
    \end{subfigure}
    \hfill
    \begin{subfigure}[b]{0.3\textwidth}
        \centering
        \begin{tikzpicture}[scale=0.5]
        \node[circle, inner sep=1.5pt, fill] 
              (v1) at (0,0) {};
        \node[circle, inner sep=1.5pt, fill] 
              (v2) at (1.6,0) {};
        \node[circle, inner sep=1.5pt, fill] 
              (v3) at (3.2,0) {};
        \end{tikzpicture}

        \caption{$\Gamma_\Z(3K_1) \cong F_3$ is multi-ended, so not self-simulable \cite{BaSaSa26}, $\emptyset$ (or any vertex) is a disconnecting clique.}
        \label{fig:sub5}
    \end{subfigure}
    \hfill
        \begin{subfigure}[b]{0.3\textwidth}
        \centering
        \begin{tikzpicture}[scale=1]
	    \def\r{0.5}    
		\coordinate (C) at (1.5,0); 
	    \foreach \i in {1,...,4} {
	        \node[circle, fill=black, inner sep=1.5pt] 
	              (u\i) at ({90-(\i-1)*360/4}:\r) {};
	        \node[circle, fill=black, inner sep=1.5pt] 
	              (v\i) at ($(C)+({90-(\i-1)*360/4}:\r)$) {};
	    }
	    \foreach \i [evaluate=\i as \j using {int(mod(\i,4)+1)}] in {1,...,4} {
	        \draw (v\i) -- (v\j);
	        \draw (u\i) -- (u\j);
	    }
	    \draw (u2)--(v4);
		\end{tikzpicture}
        \caption{Disconnecting edge, so the RAAG $F_2^2 *_{\Z^2} F_2^2$ is not self-simulable. Follows also from Lemma~\ref{lem:Cut} above (from \cite{BaBlSaSa25}).}
        \label{fig:sub6}
    \end{subfigure}
        \hfill
    \begin{subfigure}[b]{0.3\textwidth}
        \centering
        \copathnumbers{6}{0.7}
        \caption{self-simulable by Corollary~8.13 in \cite{BaSaSa26} (in the complement $P_6$, $d(\{1,2\},\{5,6\}) > 2$). No disconnecting clique.}
        \label{fig:sub7}
    \end{subfigure}
    \hfill
    \begin{subfigure}[b]{0.3\textwidth}
        \centering
        \copathnumbers{5}{0.7}
        \caption{This is $\overline{P_5}$. $\{1,5\}$ is a disconnecting clique, so the RAAG $F_2^2 *_{\Z^2} \Z^3$ is not self-simulable.}
        \label{fig:sub8}
    \end{subfigure}
    \hfill
    \begin{subfigure}[b]{0.3\textwidth}
        \centering
        \cocycle{7}{0.5}
        \caption{This is $\overline{C_7}$. By an ad hoc proof \cite[Example 8.15]{BaSaSa26}, the RAAG is self-simulable. No  disconnecting clique.}
        \label{fig:sub9}
    \end{subfigure}
    \hfill
    \begin{subfigure}[b]{0.3\textwidth}
        \centering
        \cocycle{6}{0.5}
        \caption{This is $\overline{C_6}$. Results of \cite{BaSaSa26} do not apply. No  disconnecting clique.}
        \label{fig:sub10}
    \end{subfigure}
        \hfill
    \begin{subfigure}[b]{0.3\textwidth}
        \centering
        \begin{tikzpicture}[scale=0.5]
        \node[circle, inner sep=1.5pt] 
              (v1) at (0,0) {1};
        \node[circle, inner sep=1.5pt] 
              (v2) at (1.6,0) {2};
        \node[circle, inner sep=1.5pt] 
              (v3) at (3.2,0) {3};
        \node[circle, inner sep=1.5pt] 
              (v4) at (0.8,1.6) {4};
        \node[circle, inner sep=1.5pt] 
              (v5) at (2.4,1.6) {5};
        \node[circle, inner sep=1.5pt] 
              (v6) at (1.6,3.2) {6};
	   \draw (v1) -- (v2) -- (v3) -- (v5) -- (v6) -- (v4) -- (v5) -- (v2) -- (v4) -- (v1);
        \end{tikzpicture}

        \caption{In the second Sierpinski graph, $\{4,5,2\}$ is a disconnecting clique, so the RAAG is not self-simulable.}
        \label{fig:sub11}
    \end{subfigure}
    \hfill
    \begin{subfigure}[b]{0.3\textwidth}
        \centering
        \petersen
        \caption{In the Petersen graph, there is no disconnecting clique so the RAAG is self-simulable. Results of \cite{BaSaSa26} do not apply.}
        \label{fig:sub12}
    \end{subfigure}
\hfill

    \caption{Examples of applying Theorem~\ref{th:RAAGs} to various RAAGs.}
    \label{fig:Examples}
\end{figure}

One direction of Theorem~\ref{th:GraphProducts} is almost clear from Lemma~\ref{lem:Cut}. Indeed, we have:

\begin{corollary}
\label{cor:AmenableClique}
Let $G = (V,E,(\Gamma_v)_{v \in G})$ be a finite graph, $C \subset V$ a disconnecting clique in $G$. If for all $v \in C, \Gamma_v$ is amenable, then the graph product $\Gamma(G)$ is not self-simulable.
\end{corollary}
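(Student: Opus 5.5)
We will derive this from Lemma~\ref{lem:Cut} (from \cite{BaBlSaSa25}), which we read as: a group that splits as an amalgamated free product over an amenable subgroup in a nontrivial way (or, degenerately, is itself amenable) is not self-simulable. The plan is to turn the disconnecting clique $C$ into such a splitting of $\Gamma(G)$, treating separately the degenerate case $C = V$.

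\textbf{Case $C = V$.} Then $G$ is a clique and every vertex group is amenable. Hence $\Gamma(G) \cong \prod_{v \in V} \Gamma_v$ is a finite direct product of amenable groups, and so is amenable. Amenable groups are not self-simulable by \cite{BaSaSa26}, which is also the content of the degenerate case of Lemma~\ref{lem:Cut}. (If $V = \emptyset$ the group is trivial, hence amenable, and the same conclusion holds.)

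\textbf{Case $C \neq V$.} Then $V \setminus C$ has at least two connected components. Pick one component $K_1$ and let $K_2$ be the union of the others, so $K_1, K_2 \neq \emptyset$ and no edge joins $K_1$ to $K_2$. Put $A = K_1 \cup C$ and $B = K_2 \cup C$. The standard fact about graph products is that if $V = A \cup B$ and no edge joins $A \setminus B$ to $B \setminus A$, then
\[ \Gamma(G) \cong \Gamma(G[A]) *_{\Gamma(G[C])} \Gamma(G[B]). \]
This holds because the defining presentation of $\Gamma(G)$ is exactly the union of the presentations of the two sides, glued along the common generators and relations coming from $C$. Since $C$ is a clique, $\Gamma(G[C]) = \prod_{v\in C}\Gamma_v$ is amenable. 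Induced subgraph products embed as retracts (kill the other vertex groups), so this amalgam is a genuine amalgamated free product.

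\textbf{Nondegeneracy and conclusion.} We still need $\Gamma(G[C])$ to be a proper subgroup of each factor. This holds because $K_1$ and $K_2$ are nonempty and their vertex groups are nontrivial (they are infinite, which is standing in Theorem~\ref{th:GraphProducts}; nontriviality is all we use). Applying Lemma~\ref{lem:Cut} to this splitting over the amenable group $\Gamma(G[C])$ then shows that $\Gamma(G)$ is not self-simulable.

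The main obstacle is matching the exact hypotheses of Lemma~\ref{lem:Cut}. It may require the splitting to be strictly nontrivial, for instance an index condition such as the edge group having index $\geq 3$ in one factor, or some finite-generation assumption. If an index condition is needed, we would use that the vertex groups are infinite to get infinite index of $\Gamma(G[C])$ in $\Gamma(G[A])$ via the retraction onto $\Gamma_u$ for $u \in K_1$. Finite generation of the factors is immediate from finite generation of the vertex groups.
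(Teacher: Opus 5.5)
Your proof is correct and follows essentially the same route as the paper. You decompose $V$ along the disconnecting clique as $(K_1 \cup C) \cup (K_2 \cup C)$, write $\Gamma(G)$ as an amalgamated free product over the amenable direct product $\Gamma(C)$, and apply Lemma~\ref{lem:Cut}. You also make explicit two points the paper's short proof leaves implicit: the case $C = V$, where the paper's subgraphs $G_1, G_2 \neq C$ cannot exist (there the relevant input is the amenability obstruction, not Lemma~\ref{lem:Cut}, which requires proper embeddings), and the properness of the embeddings via nontrivial vertex groups in $K_1, K_2$.
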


\begin{proof}
Let $G_1,G_2$ be two subgraphs of $G$ verifying that $G_1,G_2 \neq C, G_1 \cup G_2 = G$ and $G_1 \cap G_2 = C$. Then, we may write $\Gamma(G) = \Gamma(G_1) \ast_{\Gamma(C)} \Gamma(G_2)$. But since $C$ is a clique with amenable vertices, $\Gamma(C)$ is a direct product of amenable groups, and so it is amenable. We conclude with Lemma~\ref{lem:Cut} that $\Gamma(G)$ is not strongly self-simulable. 
\end{proof}

\subsection{Geometric consequences}

Groves and Hull proved in \cite{GrHu15} that a right-angled Artin group splits over an abelian subgroup if and only if its defining graph has a separating clique. Their proof can be generalized to amenable splittings (and possibly to general graph products), but it does not provide a geometric characterization. In particular, it does not yield that the property is quasi-isometry invariant.

In \cite{Za17}, Zaremsky showed that splitting over an abelian group (and, actually, the rank of this abelian group) is a commensurability invariant for right-angled Artin groups, and asked if it is a quasi-isometry invariant. Bensaid, Genevois and Tessera recently answered this in the positive in \cite{BeGeTe26} by using a geometric property, namely coarse separation by a family of subexponential growth. They prove more generally that a right-angled Artin group splits over an amenable subgroup if and only if it is coarsely separated by a family of subexponential growth.

Using a geometric property named extraterrestriality introduced in \cite{BaBlSaSa25}, we obtain that splitting over an amenable subgroup is a quasi-isometry invariant for non-trivial graph products. In particular, this provides another proof that splitting over an abelian subgroup is a quasi-isometry invariant for right-angled Artin groups. Extraterrestriality and coarse separation by a family of subexponential growth are independent properties, so we do not recover the main result of \cite{BeGeTe26}.

We say that a finitely-generated group $\Gamma$ \emph{splits over an amenable group} if it admits an action on a tree with no globally fixed vertices, no edge inversions, and all edge stabilizers amenable. This definition includes nontrivial HNN extensions over amenable groups, and nontrivial free products with amalgamation over an amenable subgroup.

\begin{corollary}
\label{cor:GraphProductGeometry}
Let $G = (V, E, (\Gamma_u)_{u \in V})$ be a finite graph every node of which is assigned a finitely-generated and infinite group $\Gamma_u$. Assume $G$ is not a clique with exactly one non-amenable node. Then the following conditions are equivalent:
\begin{enumerate}
\item $G$ has a disconnecting amenable clique.
\item The graph product $\Gamma(G)$ splits over an amenable subgroup.
\item The graph product $\Gamma(G)$ splits as an amalgamated free product over an amenable subgroup.
\item $G$ is extraterrestrial in the sense of \cite{BaBlSaSa25}.
\end{enumerate}
\end{corollary}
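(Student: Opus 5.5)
The plan is to close the cycle of implications $(1)\Rightarrow(3)\Rightarrow(2)\Rightarrow(1)$, and to tie $(4)$ to the rest through self-simulability, using Theorem~\ref{th:GraphProducts} as the bridge.

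\textbf{$(1)\Rightarrow(3)$.} I will argue exactly as in the proof of Corollary~\ref{cor:AmenableClique}. Let $C$ be a disconnecting amenable clique. If $C \neq V$, choose subgraphs $G_1, G_2$ with $G_1 \cup G_2 = G$, $G_1 \cap G_2 = C$ and $G_i \neq C$. Then $\Gamma(G) = \Gamma(G_1) \ast_{\Gamma(C)} \Gamma(G_2)$, and $\Gamma(C)$ is a direct product of amenable groups, hence amenable. This amalgam is nontrivial: since $G_i \supsetneq C$, the inclusion $\Gamma(C) \to \Gamma(G_i)$ is proper, by the normal form for graph products. If $C = V$, then $G$ is a clique all of whose groups are amenable. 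In that case $\Gamma(G)$ is itself amenable, so I view it as an amalgam over itself. This is degenerate, so I will also check against the paper's conventions whether such a degenerate splitting counts. If it does not, this subcase must be treated separately. The hypothesis excludes only cliques with exactly one non-amenable node, so an all-amenable clique is not excluded. This is the one delicate point in this implication.

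\textbf{$(3)\Rightarrow(2)$.} This is immediate: a nontrivial amalgam acts on its Bass--Serre tree with edge stabilizers conjugate to the amalgamated subgroup. That action has no global fixed point and no inversions.

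\textbf{$(2)\Rightarrow(1)$.} I go through self-simulability. The first step is to generalise Lemma~\ref{lem:Cut} from amalgams to arbitrary actions on trees with amenable edge stabilizers. I expect this to follow from the structure theorem of Bass--Serre theory, since the group is then either an amalgam or an HNN extension over an amenable group. For the HNN case I hope that the same argument (from \cite{BaBlSaSa25}) applies, so that such groups are not strongly self-simulable. Theorem~\ref{th:GraphProducts} then converts non-self-simulability into the existence of a disconnecting amenable clique. The main obstacle is making sure the HNN case is covered by a version of Lemma~\ref{lem:Cut}. What I would try first is checking whether that lemma is already stated for actions on trees, or for extraterrestrial groups in general. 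If so, it finishes this step directly.

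\textbf{$(4)$.} I will rely on the results of \cite{BaBlSaSa25} in both directions. First, they give that splitting over an amenable subgroup implies extraterrestriality; this is presumably how Lemma~\ref{lem:Cut} is proved. Second, they give that extraterrestrial groups are not strongly self-simulable. Combining the second fact with Theorem~\ref{th:GraphProducts} yields $(4)\Rightarrow(1)$, and the first fact together with $(1)\Rightarrow(3)\Rightarrow(2)$ yields $(1)\Rightarrow(4)$.
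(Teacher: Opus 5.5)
The one genuine gap is the subcase $C=V$ that you flagged. It cannot be filled, because the statement is false there.

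An amalgam of $\Gamma(G)$ over itself is a trivial splitting: the whole group fixes a vertex of the tree, so the paper's definition excludes it. No separate argument can rescue this case either. Take the triangle, whose RAAG is $\Z^3$ (Figure~\ref{fig:Examples}).
\begin{itemize}
\item By the paper's convention, $V$ is a disconnecting amenable clique, so (1) holds.
\item Any surjection $\Z^3\to\Z$ lets $\Z^3$ act on the line with stabilizers $\Z^2$, giving an HNN splitting, so (2) holds.
\item (3) fails. In a nontrivial amalgam $A\ast_C B$, take $a\in A\setminus C$ and $b\in B\setminus C$. The commutator $a^{-1}b^{-1}ab$ is a reduced alternating word of length $4$, hence nontrivial, so no nontrivial amalgam is abelian.
\end{itemize}
Even (2) can fail. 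A single vertex carrying an infinite finitely generated amenable torsion group (e.g.\ the Grigorchuk group) has Serre's property FA, so there (1) holds while (2) and (3) fail. Neither graph is excluded, since the hypothesis only removes cliques with \emph{exactly one} non-amenable node.

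The corollary needs the stronger hypothesis that $G$ is not a clique with at most one non-amenable node. Under that hypothesis every disconnecting amenable clique is a proper subset of $V$. Your amalgam $\Gamma(G_1)\ast_{\Gamma(C)}\Gamma(G_2)$ is then nontrivial, and $(1)\Rightarrow(3)$ goes through. The argument implicit in the paper has the same blind spot. It reuses the decomposition from the proof of Corollary~\ref{cor:AmenableClique}, which requires subgraphs $G_1,G_2\neq C$, and these do not exist when $C=V$.

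With that correction, the rest of your plan is the paper's route:
\begin{itemize}
\item $(3)\Rightarrow(2)$ via the Bass--Serre tree;
\item $(2)\Rightarrow(4)$ by \cite[Proposition~5.11]{BaBlSaSa25};
\item $(4)\Rightarrow(1)$ because extraterrestrial groups are not strongly self-simulable, combined with Theorem~\ref{th:GraphProducts}.
\end{itemize}
The obstacle you single out in $(2)\Rightarrow(1)$ is not one. Proposition~5.11 is stated for splittings in the tree-action sense, which already includes HNN extensions, so you need no HNN version of Lemma~\ref{lem:Cut}. Simply route $(2)\Rightarrow(1)$ through $(4)$, using the two facts you already quote in your last paragraph.
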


(See the end of this section for the assumption that $G$ is not a clique with exactly one non-amenable node.)

In particular, this corollary applies to right-angled Artin groups, and recovers the result of \cite{GrHu15}, since the amenable clique is of course an abelian clique in the case of a RAAG.

Note that Corollary~\ref{cor:GraphProductGeometry} also shows that for any non-trivial graph product that does not split as a direct product, a semi-splitting over an amenable subgroup (that is, an amenable subgroup relative to which the graph product has multiple ends) can be promoted to an algebraic splitting. This is somewhat reminiscent of the theorem of Stallings about ends of groups \cite{St68}.

Furthermore, as a corollary we obtain:

\begin{corollary}
Let $\Gamma(G)$ be a non-trivial graph product over finitely generated infinite groups that is not a clique with a single non-amenable vertex. Then the following are equivalent:
\begin{enumerate}
\item $\Gamma(G)$ is quasi-isometric to a group that splits over an amenable subgroup,
\item $\Gamma(G)$ splits over an amenable subgroup.
\end{enumerate}
\end{corollary}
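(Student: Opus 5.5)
The plan is to prove the equivalence of the second statement ($\Gamma(G)$ quasi-isometric to a group splitting over an amenable subgroup) and the third ($\Gamma(G)$ itself splits over an amenable subgroup), routing both through condition 4 of Corollary~\ref{cor:GraphProductGeometry}, extraterrestriality. The direction (2)$\Rightarrow$(1) is trivial, since $\Gamma(G)$ is quasi-isometric to itself. For (1)$\Rightarrow$(2), let $\Delta$ be a finitely generated group quasi-isometric to $\Gamma(G)$ that splits over an amenable subgroup. The aim is to show that $\Delta$ is extraterrestrial, that extraterrestriality is invariant under quasi-isometry (a fact I expect to take from \cite{BaBlSaSa25}, where it is introduced as a geometric property), and hence that $\Gamma(G)$ is extraterrestrial. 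Corollary~\ref{cor:GraphProductGeometry} then gives condition (1) of that corollary, so $\Gamma(G)$ has a disconnecting amenable clique and therefore splits over an amenable subgroup.

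The key step is that a group acting on a tree with no global fixed vertex, no edge inversions and amenable edge stabilizers is extraterrestrial. My first attempt would be to check whether \cite{BaBlSaSa25} proves this directly, presumably as the geometric result underlying Lemma~\ref{lem:Cut}. Lemma~\ref{lem:Cut} concerns amalgamated free products $A \ast_C B$ with $C$ amenable, and it is used in Corollary~\ref{cor:AmenableClique} to deduce non-self-simulability, which suggests that its proof in \cite{BaBlSaSa25} passes through extraterrestriality. An action on a tree splits $\Delta$ either as an amalgam or as an HNN extension over an edge stabilizer. The amalgam case should be exactly what that lemma's proof provides. For the HNN case, I would either find the analogous statement in \cite{BaBlSaSa25}, or argue directly: removing an edge of the Bass--Serre tree coarsely separates the Cayley graph along a coset of the amenable edge group, which is the separation phenomenon extraterrestriality is designed to detect.

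The main obstacle I anticipate is the finiteness requirement. The edge group in a splitting of a finitely generated group need not be finitely generated, whereas \cite{BaBlSaSa25} may require finitely generated amenable subgroups. If so, I would use the definition of splitting, which allows infinitely generated edge stabilizers, and reduce to a graph of groups with finitely many edges (always possible for finitely generated $\Delta$). I would then hope that extraterrestriality only needs a coarse separating family of amenable cosets, rather than finite generation of the edge group.

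A second point to check is that the hypotheses of Corollary~\ref{cor:GraphProductGeometry} hold: all vertex groups are infinite and finitely generated, and $G$ is not a clique with a single non-amenable node. Both are assumed here, and "non-trivial" rules out the empty graph, so the implication (4)$\Rightarrow$(1) of that corollary applies.
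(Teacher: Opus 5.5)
Your argument is the paper's. $(2)\Rightarrow(1)$ is trivial, and for $(1)\Rightarrow(2)$ you go through extraterrestriality, using its quasi-isometry invariance \cite[Theorem A]{BaBlSaSa25} together with the implication from extraterrestriality to an amenable splitting in Corollary~\ref{cor:GraphProductGeometry}. The key step you were looking for is exactly \cite[Proposition 5.11]{BaBlSaSa25}: a finitely generated group that splits over an amenable subgroup is extraterrestrial. The paper cites this in its braid group corollary and uses it only implicitly here, so you need no separate HNN-extension or finite-generation argument.
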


\begin{proof}
This is a direct consequence of the fact that for such groups, splitting over an amenable subgroup is equivalent to being extraterrestrial, and of \cite[Theorem A]{BaBlSaSa25}, which states that extraterrestriality is a quasi-isometry invariant for finitely generated groups.
\end{proof}

The fourth item of Corollary~\ref{cor:GraphProductGeometry} means that the group admits ``UFOs'', in the terminology of \cite{BaBlSaSa25}. This is a technical quasi-isometry invariant notion, which intuitively says that we can locally disconnect the graph, so that nodes on the two sides can be put in one-to-one correspondence through the disconnecting set. This is related to the notion of coarse separation by a family of subgraphs of subexponential growth in \cite{BeGeTe26}, but the two notions seem to be incomparable.

Note that since any amenable split comes from a clique split, if the amenable groups among the $\Gamma_u$ have a property $P$ which is closed under direct products, then an amenable split exists if and only if a split with property $P$ exists. For example the case where $P$ is the class of virtually nilpotent group is related to the following:

\begin{conjecture*}[Conjecture~1.7 in \cite{BeGeTe26}]
Let $G = (V, E)$ be a finite graph and let the groups $\Gamma_u, u \in V$ be infinite finitely generated virtually nilpotent groups. Then $G$ has a disconnecting clique if and only if the graph product $\Gamma(G)$ is coarsely separable by a family of subexponential growth.
\end{conjecture*}

Namely specializing the corollary above, we have:

\begin{corollary}
Let $G = (V, E)$ be a finite graph and let the groups $\Gamma_u, u \in V$ be infinite finitely generated virtually nilpotent groups. Then $G$ has a disconnecting clique if and only if the graph product $\Gamma(G)$ is extraterrestrial.
\end{corollary}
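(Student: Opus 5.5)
This is the specialization of Corollary~\ref{cor:GraphProductGeometry} to virtually nilpotent vertex groups, via the equivalence of items (1) and (4). Every finitely generated virtually nilpotent group is amenable. So every clique of $G$ is automatically an amenable clique, and ``$G$ has a disconnecting clique'' is the same as ``$G$ has a disconnecting amenable clique''. The only work is to handle the hypothesis of Corollary~\ref{cor:GraphProductGeometry} that $G$ is not a clique with exactly one non-amenable node.

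First I check that this hypothesis holds automatically. Since every $\Gamma_u$ is amenable, $G$ has no non-amenable node at all, so it is never a clique with exactly one non-amenable node. All vertex groups are finitely generated and infinite by assumption, so Corollary~\ref{cor:GraphProductGeometry} applies as stated. The equivalence (1)$\iff$(4) then says: $G$ has a disconnecting amenable clique, equivalently a disconnecting clique, if and only if $G$ (i.e.\ $\Gamma(G)$) is extraterrestrial in the sense of \cite{BaBlSaSa25}.

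The one point needing care is the degenerate situations, namely when $G$ itself is a clique, or is empty or disconnected. By the paper's convention, a clique graph counts as a disconnecting clique in itself. In that case $\Gamma(G)$ is a direct product of virtually nilpotent groups, hence amenable. I must make sure Corollary~\ref{cor:GraphProductGeometry} covers this case rather than excluding it. Its only exclusion is a clique with exactly one non-amenable node, which cannot occur here, so cliques with all nodes amenable are included and the equivalence holds there too. Disconnected $G$ is handled by $\emptyset$ being a disconnecting clique, again within the scope of the corollary. So no separate argument is needed, and the main (very mild) obstacle is just verifying these convention-level compatibilities rather than any new construction.
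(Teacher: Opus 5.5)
Your proposal is correct and is the paper's own argument: the paper gets this statement by specializing items 1 and 4 of Corollary~\ref{cor:GraphProductGeometry}. Finitely generated virtually nilpotent groups are amenable, so every clique is amenable and the excluded case (a clique with exactly one non-amenable node) cannot occur. One caution on your remark that the all-amenable clique case needs no separate argument: there $\Gamma(G)$ need not split at all (for instance $\Z^2 \rtimes \Z_3$, with $\Z_3$ acting by an order-$3$ rotation, has Serre's property FA), so the splitting items of Corollary~\ref{cor:GraphProductGeometry} cannot deliver item 4 in that case; argue it directly instead (for $V \neq \emptyset$): a finite-index nilpotent subgroup of $\Gamma(G)$ is infinite, hence surjects onto $\Z$ and splits as an HNN extension over its amenable kernel, so $\Gamma(G)$ is extraterrestrial by \cite[Proposition 5.11 and Theorem A]{BaBlSaSa25}.
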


As explained above, extraterrestriality can be seen as intuitively a form of ``separation by a thin set'', but it is not directly comparable with the notion sought in the conjecture above.

We note that in the case of a clique with exactly one non-amenable node, the characterization in Corollary~\ref{cor:GraphProductGeometry} does not continue to hold. For example, clearly if there is only one node $u$, and $\Gamma_u$ is nonamenable, then whether the corresponding graph product $\Gamma \cong \Gamma_u$ splits over an amenable subgroup is not a function of the shape of the graph. The situation is the same for cliques with a single non-amenable node, as the following proposition shows (see Section~\ref{sec:CliqueGeometryThing} for the proof).

\begin{proposition}
\label{prop:CliqueGeometryThing}
Let $G$ be a clique of finitely-generated groups, with a single non-amenable vertex $\Delta$. Then $\Gamma(G)$ splits non-trivially over an amenable subgroup if and only if $\Delta$ does. 
\end{proposition}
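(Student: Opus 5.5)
A clique graph product is a direct product, so the plan is to work with $\Gamma(G)\cong \Delta\times A$, where $A=\prod_{v\neq u}\Gamma_v$ is a finite direct product of amenable groups and hence amenable. The statement then becomes: $\Delta\times A$ splits non-trivially over an amenable subgroup if and only if $\Delta$ does. Both directions will be argued through Bass--Serre theory.

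For the easy direction, suppose $\Delta$ acts on a tree $T$ without global fixed vertex, without inversions, and with amenable edge stabilizers. I will let $\Delta\times A$ act on $T$ through the projection to $\Delta$. The image of the group is still $\Delta$, so there is no global fixed vertex and no inversion. An edge stabilizer has the form $\mathrm{Stab}_\Delta(e)\times A$, which is amenable because it is an extension of amenable groups. So $\Delta\times A$ splits over an amenable subgroup.

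For the converse, suppose $\Delta\times A$ acts on a tree $T$ with no global fixed point, no inversions, and amenable edge stabilizers. The key step is to exploit that $A$ is a normal amenable subgroup, and I would split into two cases.

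\emph{Case 1: $A$ fixes a point of $T$.} Since $A$ is normal, its fixed-point set $T^A$ is a non-empty $\Delta\times A$-invariant subtree. Restricting the action to $T^A$, the factor $A$ acts trivially, so $\Delta$ acts on $T^A$. Edge stabilizers in $\Delta$ are subgroups of the original amenable stabilizers, so they are amenable. If $\Delta$ had a global fixed point in $T^A$, that point would be fixed by all of $\Delta\times A$, which is excluded. Hence $\Delta$ splits non-trivially over an amenable subgroup.

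\emph{Case 2: $A$ has no fixed point.*} I will use the standard trichotomy for group actions on trees, applied to the amenable (possibly not finitely generated) group $A$. There are two possibilities:
\begin{itemize}
\item $A$ has an invariant line on which it acts by translations;
\item $A$ fixes a unique end of $T$, or it fixes a unique pair of ends.
\end{itemize}
The key structural fact is that an amenable group acting on a tree either fixes a point, fixes an end, or preserves a pair of ends (a line); in particular it contains no free subgroup generated by two hyperbolic elements. The set of such fixed ends, or the invariant line, is canonical, so it is preserved by the normalizer of $A$, which is the whole group $\Delta\times A$.

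In the line subcase, $\Delta\times A$ preserves a line $L$. This gives a homomorphism $\Delta\times A\to \mathrm{Isom}(L)\cong \Z\rtimes\Z/2$, and the image is an infinite group because $A$ already acts on $L$ by translations. Since $\Delta$ commutes with $A$, the image of $\Delta$ commutes with a non-trivial translation, so it lies in the translation subgroup $\Z$.
\begin{itemize}
\item If the image of $\Delta$ is infinite, then $\Delta$ surjects onto a finite-index subgroup of $\Z$. Therefore $\Delta$ splits as an HNN extension over the kernel $K$.
\end{itemize}
This is where I need to make sure $K$ is amenable; that is not automatic, and this is the main obstacle. The edge stabilizers of the action on $L$ contain $K$, and I can get amenability if I can arrange that $K$ fixes an edge of $T$ lying on $L$. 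Since $K$ acts trivially on $L$, it fixes every edge of $L$, so $K$ is contained in the stabilizer of an edge of $L$. That stabilizer is amenable by hypothesis, so $K$ is amenable and the HNN splitting of $\Delta$ is over an amenable subgroup.

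The fixed-end subcase, and the case where the image of $\Delta$ on $L$ is finite, are the delicate cases I expect to be the hardest. Here I would try to use the Busemann character associated with the fixed end. $A$ contains hyperbolic elements, because it fixes no point while having amenable image. The character must then be non-trivial on $A$, so there remains the question of whether it is non-trivial on $\Delta$.

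The idea to rule out the bad situation is as follows. If $\Delta$ lies in the kernel of the character, then $\Delta$ consists of elliptic elements that fix a common end. I then want to show that $\Delta$ is contained in a nested union of edge stabilizers along a ray, and hence is amenable, because a directed union of amenable groups is amenable. But $\Delta$ is non-amenable, which gives a contradiction. For this I would use finite generation of $\Delta$: the generators of $\Delta$ are elliptic, commute with a hyperbolic element $a\in A$, and therefore fix the axis of $a$ pointwise. It follows that $\Delta$ fixes an edge, and so it is amenable, which is a contradiction.

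The same axis argument handles the case where the image of $\Delta$ on $L$ is finite. Passing to a finite-index subgroup of $\Delta$ that commutes with a hyperbolic element of $A$, this subgroup fixes the axis $L$ pointwise. It is then contained in an amenable edge stabilizer, which again contradicts the non-amenability of $\Delta$. Hence only the case where $\Delta$ has an infinite image to $\Z$ survives, and in that case the kernel argument above gives the required amenable splitting of $\Delta$.
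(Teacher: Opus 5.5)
Your easy direction and your Case~1 (where $A$ fixes a point) are correct. Case~2, however, rests on the sentence ``$A$ contains hyperbolic elements, because it fixes no point while having amenable image'', and that implication is false. An amenable group can act on a tree with no global fixed vertex while every element is elliptic. This is the horocyclic case; for example, take a countable locally finite group $H=\bigcup_n H_n$ acting on the coset tree of a strictly increasing chain of finite subgroups $H_0<H_1<\cdots$. You explicitly allowed $A$ to be non-finitely generated, so this case is live in your write-up. There your fixed-end argument breaks down: the Busemann character vanishes on $A$, and there is no axis of a hyperbolic $a\in A$ for the generators of $\Delta$ to fix.

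The missing input is that $A$ is a finite product of finitely generated vertex groups, hence finitely generated. By Serre's lemma, a finitely generated group acting without inversions with all elements elliptic has a global fixed point, so Case~2 does produce a hyperbolic $a\in A$. Alternatively, if $\Delta$ is entirely elliptic and fixes an end $\xi$, finite generation of $\Delta$ gives a fixed vertex $v$; then $\Delta$ fixes the ray $[v,\xi)$, hence an edge. There is also a loose end at the close. In the fixed-end case with $\beta|_\Delta\neq 0$, ``the kernel argument above'' does not apply verbatim, because $\ker\beta|_\Delta$ was never shown to act trivially on a line. You need the axis argument once more: its elements are elliptic and commute with $a$, so they fix $\mathrm{Axis}(a)$ pointwise.

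Once you have a hyperbolic $a\in A$, the trichotomy, the Busemann character and the HNN construction all become unnecessary. Since $\Delta$ centralizes $a$, it preserves $\mathrm{Axis}(a)$ and acts on it by translations. If the image is trivial, $\Delta$ lies in an amenable edge stabilizer, a contradiction. If it is nontrivial, $\Delta$ has a hyperbolic element, so the restriction of the original action to $\Delta$ already has no global fixed vertex and amenable edge stabilizers.

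The paper's proof is shorter still, because it cases on $\Delta$ rather than on $A$. If $\Delta$ has no global fixed vertex, restrict the action to $\Delta$. Otherwise $\mathrm{Fix}(\Delta)$ is a subtree that cannot contain an edge, by non-amenability of $\Delta$, so it is a single vertex. Since $A$ commutes with $\Delta$, it preserves $\mathrm{Fix}(\Delta)$ and therefore fixes that vertex, contradicting the absence of a global fixed vertex for $\Delta\times A$. That argument uses neither finite generation of $A$ nor any classification of actions on trees.
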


We also note an easy consequence of extraterrestriality that is related to amenable splittings and the work of Zaremsky in \cite{Za17}. Namely, he proves that a braid group over at least 4 braids is not commensurable to any group that splits over a free-group free subgroup. Similarly,

\begin{corollary}
For $n \geq 7$, the braid group $B_n$ is not quasi-isometric to any finitely generated group that splits over an amenable subgroup.
\end{corollary}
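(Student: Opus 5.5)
I plan to deduce the corollary from extraterrestriality together with known results on braid groups, mirroring the argument for the quasi-isometry corollary above.

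\emph{Step 1: braid groups are not extraterrestrial.} By \cite{BaSaSa26}, $B_n$ is self-simulable for $n \geq 7$, and it has decidable word problem. The key input from \cite{BaBlSaSa25} is that an extraterrestrial group with decidable word problem cannot be (strongly) self-simulable; this is the mechanism behind Lemma~\ref{lem:Cut}, where UFOs carry unboundedly much information through a thin set. I therefore expect to obtain that $B_n$ is not extraterrestrial for $n \geq 7$. If only the strong version of non-self-simulability is available, I will use that $B_n$ is finitely presented, hence recursively presented, so strong self-simulability coincides with self-simulability.

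\emph{Step 2: groups splitting over an amenable subgroup are extraterrestrial.} Let $\Gamma$ be finitely generated and act on a tree $T$ without global fixed points or inversions, with amenable edge stabilizers. I want to show $\Gamma$ is extraterrestrial. The implication (2)$\Rightarrow$(4) of Corollary~\ref{cor:GraphProductGeometry} is proved only for graph products, so I will instead rely on the general statement in \cite{BaBlSaSa25} that groups which are multi-ended relative to an amenable subgroup are extraterrestrial. A splitting gives exactly this. Pick an edge $e$ of $T$ with stabilizer $H$. Since the action has no global fixed point, both half-trees determined by $e$ contain infinitely many $\Gamma$-translates of a vertex. Hence the orbit map shows that $\Gamma$ has at least two ends relative to $H$, i.e.\ $\tilde e(\Gamma,H) \geq 2$. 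From this one obtains UFOs as follows: take Følner sets in $H$ and use them as the thin disconnecting sets, with the two sides being large pieces of the two half-spaces, matched through the cut.

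\emph{Step 3: conclude by quasi-isometry invariance.} Suppose $B_n$ were quasi-isometric to a finitely generated group $\Gamma'$ that splits over an amenable subgroup. By Step 2, $\Gamma'$ is extraterrestrial. By \cite[Theorem A]{BaBlSaSa25}, so is $B_n$, contradicting Step 1.

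I expect Step 2 to be the main obstacle. The half-trees have to be matched in a bijective, bounded-displacement way through a Følner-like cut, and the splitting may be an HNN extension rather than an amalgam. If \cite{BaBlSaSa25} does not state this directly for general splittings, my fallback is to check by hand that the UFO construction used for Lemma~\ref{lem:Cut} works for any edge of the Bass--Serre tree, not only for amalgams.
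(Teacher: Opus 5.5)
Your proposal is correct and is essentially the paper's proof: a group splitting over an amenable subgroup is extraterrestrial, extraterrestriality is a quasi-isometry invariant by \cite[Theorem A]{BaBlSaSa25}, and $B_n$ for $n \geq 7$ is not extraterrestrial because it is self-simulable \cite{BaSaSa26}. The only difference is that the paper takes your Step~2 directly from \cite[Proposition 5.11]{BaBlSaSa25}, which covers arbitrary splittings over amenable subgroups, so your relative-ends detour and fallback construction are unnecessary; if you kept the detour, you would need to choose the edge $e$ in the minimal invariant subtree, since for an arbitrary edge one half-tree can be finite.
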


\begin{proof}
A finitely generated group that splits over an amenable subgroup is extraterrestrial \cite[Proposition 5.11]{BaBlSaSa25}. But extraterrestriality is a quasi-isometry invariant \cite[Theorem A]{BaBlSaSa25}, and $B_n$ is not extraterrestrial because it is self-simulable \cite{BaSaSa26}.
\end{proof}

\section{Definitions and convention}

\subsection{Strong self-simulability and obstructions}
\label{sec:SelfSimulability}

As noted in \cite{BaBlSaSa25}, it is better to use the following ``relativized'' definition of self-simulability. This has the benefit of removing assumptions on the word problem from our main theorems:

\begin{definition}
We say an action $(\Gamma, X)$ is \emph{relatively effective} if, letting $S$ be a generating set for $\Gamma$, there exists an effective action of the free group $(F_S, Z)$ such that $(\Gamma, X)$ is topologically conjugate to $(\Gamma, Z')$ where $Z'$ is the set of points in $Z$ stabilized by every relator of $\Gamma$. We say a group of \emph{strongly self-simulable} if all its relatively effective actions are SFT covered.
\end{definition}

When a group is recursively presented, relatively effective systems are effective, and thus strong self-simulability and self-simulability are the same notion.

It was shown in \cite{AuBaSa17} that self-simulability implies nonamenability and one-endedness for groups with decidable word problem, and \cite{BaSaSa26} shows that direct products of one-ended and amenable groups are also not self-simulable.

A geometric obstruction to self-simulation named \emph{UFOs} is presented in \cite{BaBlSaSa25}, which applies to a more general set of groups, and (when using the definition of strong self-simulation) also groups with arbitrarily complicated word problem. This obstruction amounts to a thin subset (typically, an amenable subgroup) separating the group. We make extensive use of these results, and in particular of the following lemma.

\begin{lemma}
\label{lem:Cut}
Let $\Gamma_1,\Gamma_2, \Delta$ be finitely generated groups with $\Delta$ amenable and $\iota_1 : \Delta \to \Gamma_1, \iota_2 : \Delta \to \Gamma_2$ proper embeddings. Then the amalgamated free product $\Gamma_1 \ast_{\iota_1,\iota_2} \Gamma_2$ is not strongly self-simulable.
 \end{lemma}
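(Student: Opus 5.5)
The plan is to exhibit a UFO in the sense of \cite{BaBlSaSa25} in $\Gamma = \Gamma_1 \ast_{\iota_1,\iota_2} \Gamma_2$, and then invoke the theorem of that paper that groups admitting UFOs (extraterrestrial groups) are not strongly self-simulable. Equivalently, I can cite \cite[Proposition 5.11]{BaBlSaSa25}, which says that a finitely generated group splitting over an amenable subgroup is extraterrestrial. Here the splitting is nontrivial precisely because $\iota_1,\iota_2$ are proper embeddings. That makes the Bass--Serre tree $T$ of the amalgam have vertices of degree at least $2$, so $\Gamma$ acts on $T$ without a global fixed vertex and with edge stabilizers the conjugates of $\Delta$.

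If I have to build the UFO by hand, I will take the edge $e$ of $T$ stabilized by $\Delta$. Removing $e$ splits $T$ into two half-trees, and this gives a partition of $\Gamma$ into two pieces $A$ and $B$ according to which side $g\cdot v_0$ lies on. Properness on both sides ensures that each piece is infinite, and in fact each contains arbitrarily large balls. Next I check that the coarse boundary between $A$ and $B$ is contained in a bounded neighbourhood of the coset $\Delta$. Indeed, a generator can move $g v_0$ across $e$ only if $g$ lies within bounded distance of $\Delta$ in the word metric.

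Amenability of $\Delta$ then supplies Følner sets $F_n \subset \Delta$. I will thicken $F_n$ to a finite set $N_n$ that separates a large finite region on the $A$-side from one on the $B$-side. The boundary of these regions should be controlled by $|F_n|$, or even by $|\partial F_n| = o(|F_n|)$. Meanwhile, translating by elements of $\Gamma_1 \setminus \iota_1(\Delta)$ and $\Gamma_2 \setminus \iota_2(\Delta)$ moves these regions to places where the two sides can be matched injectively through the thin separating set, which is the ``one-to-one correspondence through the disconnecting set'' in the UFO definition.

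I expect the main obstacle to be matching the precise quantitative definition of UFOs, that is, the counting of patterns that can cross the cut. For that reason I would first try simply to quote \cite[Proposition 5.11]{BaBlSaSa25} together with the theorem in \cite{BaBlSaSa25} that extraterrestrial groups are not strongly self-simulable. Only if that route is unavailable would I carry out the Følner thickening argument above, via a pigeonhole on the boundary patterns.
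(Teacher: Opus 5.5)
Your proposal is correct and matches the paper, which gives no argument of its own for Lemma~\ref{lem:Cut}. The paper imports the lemma from \cite{BaBlSaSa25}, where it follows exactly as you say: a finitely generated group splitting nontrivially over an amenable subgroup (nontrivial here because $\iota_1,\iota_2$ are proper) is extraterrestrial \cite[Proposition 5.11]{BaBlSaSa25}, and extraterrestrial groups are not strongly self-simulable. Your hand-built F\o lner/UFO fallback is therefore unnecessary.
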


\subsection{Graph products}
\label{sec:GraphProducts}

A graph $(V, E)$ is always by default undirected and simple (no self-loops or multiple edges), and $V$ are the \emph{nodes} or \emph{vertices}. We also often assign groups to the vertices, and still call a triple $(V, E, (\Gamma_u)_{u \in V})$ a graph.

Let $(V, E)$ be a finite graph where to each $u \in V$ we have associated a group $\Gamma_u$. We assume that the $\Gamma_u$ are disjoint, apart from sharing the identity element. Then the corresponding \emph{graph product} is 
\[\Gamma(G) = \langle \bigcup_{u \in V} \Gamma_u \;|\; [\Gamma_u, \Gamma_v] \mbox{ when } (u, v) \in E \rangle, \]
i.e.\ this is can be thought of as the freest group where each $\Gamma_u$ ($u \in V$) embeds, and for each edge $(u, v) \in E$, the corresponding groups $\Gamma_u, \Gamma_v$ commute.

We note that the definition above makes sense even if the graph $(V, E)$ is infinite. However, we recall that the graph product is a finitely-generated group if and only if the graph $(V, E)$ is finite, and all the groups $\Gamma_u$ are finitely-generated. As explained in \cite[Section~3.2]{BaSaSa26}, an infinitely-generated group is never self-simulable. Thus, we restrict our attention to finite graphs.

We will sometimes confuse the vertices $V$ with the corresponding groups, and more generally sets of vertices with the subgroups generated by the corresponding vertex groups. For example, an \emph{amenable clique} is a clique in $(V, E)$ to every vertex of which is associated an amenable group. Note that a clique corresponds to a direct product, and a direct product of amenable groups is amenable, so indeed the subgroup generated by amenable vertex groups in a clique is itself amenable.

The \emph{right-angled Artin groups} or \emph{RAAGs} are the graph products where all the $\Gamma_u$ are infinite cyclic, that is $\Gamma_u \cong \Z$. The \emph{right-angled Coxeter groups} or \emph{RACGs} are the graph products where all the $\Gamma_u$ are cyclic of order 2, that is $\Gamma_u \cong \Z_2$. 

If the word problem is decidable in each $\Gamma_u$, then it is also decidable in the graph product. In particular, the word problem is decidable on RAAGs and RACGs.

\subsection{Convention}

\[ 0 \in \N \]

\section{Self-simulable graph products of infinite groups}

The goal of this section will be to prove Theorems~\ref{th:GraphProducts} and~\ref{th:RAAGs}. For the sake of brevity, we introduce the following definition.

\begin{definition}[Atomic Graph]

Let $(A_i)_{i \in I}, (B_j)_{j \in J}$ two families of f.g. groups such that the $A_i$ are infinite and amenable and the $B_j$ are non-amenable. Let $G = (V,E)$ a graph with $V = \{A_i	\;|\; i \in I\} \cup \{B_j \;|\; j \in J\}$. $G$ is atomic if:
\begin{itemize}
\item It admits no disconnecting clique the vertices of which are amenable.
\item $|J|\geq2$ or $G$ does not form a clique.
\end{itemize}
\end{definition}

The difficult implication in Theorem~\ref{th:GraphProducts} then reduces to showing that graph products of atomic graphs are strongly self-simulable. (The easy implication Corollary~\ref{cor:AmenableClique} proves directly.)

The following lemma states the main properties of atomic graphs.

\begin{lemma}
\label{lem:GraphTheory}
If $G$ is an atomic graph and $C \subset G$ is an amenable clique which does not contain all elements of $G$, then $G\setminus C$ is connected, every point of $C$ is connected to a point of $G\setminus C$ and $G\setminus C$ contains at least two points.
\end{lemma}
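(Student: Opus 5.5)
The three assertions are proved separately, each by contradiction: if one failed, we would exhibit a disconnecting amenable clique, or show that $G$ is a clique with at most one non-amenable vertex. Throughout, $C$ is an amenable clique with $C \neq V$.

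\emph{Connectedness of $G \setminus C$.} Since $C \neq V$ and $G$ has no disconnecting amenable clique, $C$ is not disconnecting. By definition this means $G \setminus C$ has exactly one connected component. It is nonempty because $C \neq V$, so it is connected.

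\emph{At least two points.} Suppose $G \setminus C = \{w\}$ is a single vertex.
\begin{itemize}
\item First let $w$ be adjacent to every vertex of $C$. Then $G$ is a clique, and since $C$ is amenable, at most one vertex (namely $w$) is non-amenable. Hence $|J| \le 1$ and $G$ is a clique, contradicting atomicity.
\item Otherwise some $c \in C$ is not adjacent to $w$. Consider $C' = C \setminus \{c\}$, which is still an amenable clique. Then $G \setminus C' = \{c, w\}$ has no edge, so it has two components. Thus $C'$ is a disconnecting amenable clique, again a contradiction.
\end{itemize}
This also covers $C = \emptyset$: then $G$ is a single vertex, hence a clique with $|J| \le 1$.

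\emph{Every $c \in C$ has a neighbour outside $C$.} Suppose some $c \in C$ has all its neighbours inside $C$. Let $C' = C \setminus \{c\}$, again an amenable clique. In $G \setminus C'$, the vertex $c$ has no neighbours, because its only possible neighbours outside $C'$ would lie in $G \setminus C$. Also $G \setminus C$ is nonempty, so $G \setminus C'$ contains $c$ together with at least one other vertex not adjacent to it. Hence $G \setminus C'$ has at least two components, so $C'$ is a disconnecting amenable clique, a contradiction.

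The argument is elementary; the only point needing care is the degenerate case $C' = \emptyset$. This is consistent with the convention that $\emptyset$ is a clique and that disconnecting means "number of components $\neq 1$", so the empty clique disconnects a disconnected graph. With that convention fixed, each step is immediate.
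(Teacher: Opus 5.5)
Your proof is correct and follows essentially the same route as the paper. Connectedness comes straight from the definition, and the other two claims are obtained by removing a vertex $c$ from $C$ to produce the disconnecting amenable clique $C\setminus\{c\}$. The one organizational difference is in the two-point claim: you split directly on whether the lone vertex $w$ is adjacent to all of $C$, whereas the paper first proves the neighbour claim and then splits on $|J|\geq 2$ versus the $A_i$ not forming a clique. Your version is slightly tidier, since it uses the atomicity condition ``$|J|\geq 2$ or $G$ is not a clique'' exactly as stated.
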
 

\begin{proof}
  The connectedness of $G\setminus C$ is a direct consequence of the definition. If there exists a point $p \in C$ that is connected to no point of $G\setminus C$, then $C\setminus \{p\}$ is a disconnecting clique. Finally, if $|J|\geq2$, it is clear that $G\setminus C \supset \{B_j \;|\;j \in J\}$ contains at least two points, and if the $\{A_i \;|\; i \in I\}$ do not form a clique, then there must exist $A_{i_0} \notin C$. But every point of $C$ is connected to a point of $G\setminus C$, so that $A_{i_0}$ cannot be alone in $G\setminus C$ without $\{A_i \;|\; i \in I\}$ being a clique.
 \end{proof}

\subsection{RAAG warmup}

As a warmup, we begin with an outline of the argument in the case of RAAGs. Hence, let $G$ be a graph that has no disconnecting clique. The group $\Z$ has decidable word problem, so in this case strong self-simulability is the same as self-simulability. Thus, let $\Gamma(G) \curvearrowright X$ be an effective action. 
\begin{enumerate}
\item We construct a subshift of finite type that does the following. First, at every point $g$ of the group, we pick a set of directions $\bB(g)$, that is a set of vertices of $G$. The coset $g \langle \bB(g) \rangle $ is called the bush at $g$. We ask that the bush contains a coset that is isomorphic to $\Z^2$. We further ask that the bushes at $g$ and at $g s$ have a direction in common that commutes with $s$. Finally, we ask that the set of vertices corresponding to each bush forms a connected subgraph of $G$. In the case of RAAGs, simply choosing for $\bB(g)$ the complement of the possible last letters of a reduced writing of $g$ satisfies the conditions.

\item We then choose a direction in each bush on which to write an element of $\{0,1\}^\N$. The point now is to make sure that this element is in $X$, and that for every generator $s$, the element that is written on the bush at $g s$ is obtained by applying $s^{-1}$ to the element that is written on the bush at $g$. Indeed, if this is the case, the map that sends a configuration of the subshift to the element that is written on the bush at $1_{\Gamma_\Z (G)}$ will be equivariant.

\item Since the set of directions chosen at $g$ forms a connected graph, we can make sure that the word written on every direction beginning at $g$ is the same. This is done by synchronizing the word along the diagonals of each embedded $\Z^2$ in the bush. This is shown in Figure~\ref{fig:synchro}.

\begin{figure}
\begin{center}
\begin{tikzpicture}[scale=0.8]
\draw (-0.7071067811865475, 0.7071067811865475) edge[black, dotted, stealth-stealth, shorten <=2pt, shorten >=2pt] (0.2169304578186562, 0.9761870601839527);
\draw (-1.414213562373095, 1.414213562373095) edge[black, dotted, stealth-stealth, shorten <=2pt, shorten >=2pt] (0.4338609156373124, 1.9523741203679055);
\draw (-2.1213203435596424, 2.1213203435596424) edge[black, dotted, stealth-stealth, shorten <=2pt, shorten >=2pt] (0.6507913734559686, 2.928561180551858);
\draw (-2.82842712474619, 2.82842712474619) edge[black, dotted, stealth-stealth, shorten <=2pt, shorten >=2pt] (0.8677218312746248, 3.904748240735811);
\draw (0.2169304578186562, 0.9761870601839527) edge[black, dotted, stealth-stealth, shorten <=2pt, shorten >=2pt] (0.8944271909999159, 0.4472135954999579);
\draw (0.4338609156373124, 1.9523741203679055) edge[black, dotted, stealth-stealth, shorten <=2pt, shorten >=2pt] (1.7888543819998317, 0.8944271909999159);
\draw (0.6507913734559686, 2.928561180551858) edge[black, dotted, stealth-stealth, shorten <=2pt, shorten >=2pt] (2.6832815729997477, 1.3416407864998738);
\draw (0.8677218312746248, 3.904748240735811) edge[black, dotted, stealth-stealth, shorten <=2pt, shorten >=2pt] (3.5777087639996634, 1.7888543819998317);
\draw (0.8944271909999159, 0.4472135954999579) edge[black, dotted, stealth-stealth, shorten <=2pt, shorten >=2pt] (0.9701425001453318, -0.24253562503633294);
\draw (1.7888543819998317, 0.8944271909999159) edge[black, dotted, stealth-stealth, shorten <=2pt, shorten >=2pt] (1.9402850002906635, -0.4850712500726659);
\draw (2.6832815729997477, 1.3416407864998738) edge[black, dotted, stealth-stealth, shorten <=2pt, shorten >=2pt] (2.910427500435995, -0.7276068751089988);
\draw (3.5777087639996634, 1.7888543819998317) edge[black, dotted, stealth-stealth, shorten <=2pt, shorten >=2pt] (3.880570000581327, -0.9701425001453318);
\draw[thick] (-0.0, 0.0) -- (1.084652289093281, 4.880935300919764);
\draw[thick] (0.0, 0.0) -- (-3.5355339059327373, 3.5355339059327373);
\draw[] (-0.7071067811865475, 0.7071067811865475) -- (0.3775455079067336, 5.588042082106311);
\draw[] (0.2169304578186562, 0.9761870601839527) -- (-3.318603448114081, 4.51172096611669);
\draw[] (-1.414213562373095, 1.414213562373095) -- (-0.3295612732798139, 6.295148863292859);
\draw[] (0.4338609156373124, 1.9523741203679055) -- (-3.101672990295425, 5.487908026300643);
\draw[] (-2.1213203435596424, 2.1213203435596424) -- (-1.0366680544663613, 7.002255644479407);
\draw[] (0.6507913734559686, 2.928561180551858) -- (-2.8847425324767686, 6.464095086484596);
\draw[] (-2.82842712474619, 2.82842712474619) -- (-1.7437748356529088, 7.709362425665954);
\draw[] (0.8677218312746248, 3.904748240735811) -- (-2.6678120746581127, 7.440282146668548);
\draw[thick] (0.0, 0.0) -- (4.47213595499958, 2.23606797749979);
\draw[thick] (0.0, 0.0) -- (1.084652289093281, 4.880935300919764);
\draw[] (0.2169304578186562, 0.9761870601839527) -- (4.689066412818236, 3.2122550376837427);
\draw[] (0.8944271909999159, 0.4472135954999579) -- (1.9790794800931968, 5.328148896419722);
\draw[] (0.4338609156373124, 1.9523741203679055) -- (4.905996870636892, 4.1884420978676955);
\draw[] (1.7888543819998317, 0.8944271909999159) -- (2.8735066710931125, 5.7753624919196795);
\draw[] (0.6507913734559686, 2.928561180551858) -- (5.122927328455548, 5.164629158051648);
\draw[] (2.6832815729997477, 1.3416407864998738) -- (3.7679338620930287, 6.222576087419638);
\draw[] (0.8677218312746248, 3.904748240735811) -- (5.339857786274204, 6.140816218235601);
\draw[] (3.5777087639996634, 1.7888543819998317) -- (4.662361053092944, 6.669789682919595);
\draw[thick] (0.0, 0.0) -- (4.850712500726659, -1.2126781251816647);
\draw[thick] (0.0, -0.0) -- (4.47213595499958, 2.23606797749979);
\draw[] (0.8944271909999159, 0.4472135954999579) -- (5.745139691726575, -0.7654645296817069);
\draw[] (0.9701425001453318, -0.24253562503633294) -- (5.442278455144911, 1.993532352463457);
\draw[] (1.7888543819998317, 0.8944271909999159) -- (6.63956688272649, -0.3182509341817489);
\draw[] (1.9402850002906635, -0.4850712500726659) -- (6.412420955290243, 1.7509967274271239);
\draw[] (2.6832815729997477, 1.3416407864998738) -- (7.533994073726406, 0.1289626613182091);
\draw[] (2.910427500435995, -0.7276068751089988) -- (7.382563455435575, 1.508461102390791);
\draw[] (3.5777087639996634, 1.7888543819998317) -- (8.428421264726323, 0.576176256818167);
\draw[] (3.880570000581327, -0.9701425001453318) -- (8.352705955580907, 1.265925477354458);
\node () at (-0.8571067811865475, 0.5571067811865474) {$a$};
\node () at (0.01693045781865618, 0.7761870601839527) {$b$};
\node () at (0.6444271909999159, 0.4472135954999579) {$c$};
\node () at (0.8701425001453318, -0.34253562503633295) {$d$};
\end{tikzpicture}
\end{center}
\vspace{-0.5cm}
\caption{Suppose the bush at node $g$ is $\{a, b, c, d\}$, and $\{a, b\}, \{b, c\}, \{c, d\}$ commute pairwise. Each commuting pair spans a grid, and we use the dotted diagonal lines on the grids to synchronize $Y$-configurations stored on the rays $g\{s^n \;|\; n \in \N\}$ for different values of $s \in \{a, b, c, d\}$.}
\label{fig:synchro}
\end{figure}

\item Now, we use each bush $g \langle \bB(g) \rangle$ to simulate a Turing machine, by inscribing an instance of the tiling problem on a copy of $\Z^2$ that is contained in the bush. This Turing machine checks that the element of $\{0,1\}^\N$ that is written on $g \langle \bB(g) \rangle$ is in $X$, and that the element that is written on $g s \langle \bB(g s) \rangle$ is obtained by applying $s$. This can be done, because the bushes $g \langle \bB(g) \rangle$ and $g s \langle \bB(g s) \rangle$ share a direction, say $a$, that commutes with $s$. Hence, the cosets $g \langle a \rangle$ and $g s \langle a \rangle$ stay at bounded distance from one another.\footnote{A technical detail is that to be able to perform the comparison easily, we need to compute not only a single $x \in X$, but also its neighbors, see Definition~\ref{def:SetRepresentation}.} This is illustrated in Figure~\ref{fig:compute}.

\begin{figure}
\begin{center}
\begin{tikzpicture}[scale=0.8]
\draw[thick, black!30!white] (-0.0, 0.0) -- (1.084652289093281, 4.880935300919764);
\draw[thick, black!30!white] (0.0, 0.0) -- (-3.5355339059327373, 3.5355339059327373);
\draw[, black!30!white] (-0.7071067811865475, 0.7071067811865475) -- (0.3775455079067336, 5.588042082106311);
\draw[, black!30!white] (0.2169304578186562, 0.9761870601839527) -- (-3.318603448114081, 4.51172096611669);
\draw[, black!30!white] (-1.414213562373095, 1.414213562373095) -- (-0.3295612732798139, 6.295148863292859);
\draw[, black!30!white] (0.4338609156373124, 1.9523741203679055) -- (-3.101672990295425, 5.487908026300643);
\draw[, black!30!white] (-2.1213203435596424, 2.1213203435596424) -- (-1.0366680544663613, 7.002255644479407);
\draw[, black!30!white] (0.6507913734559686, 2.928561180551858) -- (-2.8847425324767686, 6.464095086484596);
\draw[, black!30!white] (-2.82842712474619, 2.82842712474619) -- (-1.7437748356529088, 7.709362425665954);
\draw[, black!30!white] (0.8677218312746248, 3.904748240735811) -- (-2.6678120746581127, 7.440282146668548);
\draw[thick, black!30!white] (0.0, 0.0) -- (4.47213595499958, 2.23606797749979);
\draw[thick, black!30!white] (0.0, 0.0) -- (1.084652289093281, 4.880935300919764);
\draw[, black!30!white] (0.2169304578186562, 0.9761870601839527) -- (4.689066412818236, 3.2122550376837427);
\draw[, black!30!white] (0.8944271909999159, 0.4472135954999579) -- (1.9790794800931968, 5.328148896419722);
\draw[, black!30!white] (0.4338609156373124, 1.9523741203679055) -- (4.905996870636892, 4.1884420978676955);
\draw[, black!30!white] (1.7888543819998317, 0.8944271909999159) -- (2.8735066710931125, 5.7753624919196795);
\draw[, black!30!white] (0.6507913734559686, 2.928561180551858) -- (5.122927328455548, 5.164629158051648);
\draw[, black!30!white] (2.6832815729997477, 1.3416407864998738) -- (3.7679338620930287, 6.222576087419638);
\draw[, black!30!white] (0.8677218312746248, 3.904748240735811) -- (5.339857786274204, 6.140816218235601);
\draw[, black!30!white] (3.5777087639996634, 1.7888543819998317) -- (4.662361053092944, 6.669789682919595);
\draw[thick, black] (0.0, 0.0) -- (4.850712500726659, -1.2126781251816647);
\draw[thick, black] (0.0, -0.0) -- (4.47213595499958, 2.23606797749979);
\draw[, black] (0.8944271909999159, 0.4472135954999579) -- (5.745139691726575, -0.7654645296817069);
\draw[, black] (0.9701425001453318, -0.24253562503633294) -- (5.442278455144911, 1.993532352463457);
\draw[, black] (1.7888543819998317, 0.8944271909999159) -- (6.63956688272649, -0.3182509341817489);
\draw[, black] (1.9402850002906635, -0.4850712500726659) -- (6.412420955290243, 1.7509967274271239);
\draw[, black] (2.6832815729997477, 1.3416407864998738) -- (7.533994073726406, 0.1289626613182091);
\draw[, black] (2.910427500435995, -0.7276068751089988) -- (7.382563455435575, 1.508461102390791);
\draw[, black] (3.5777087639996634, 1.7888543819998317) -- (8.428421264726323, 0.576176256818167);
\draw[, black] (3.880570000581327, -0.9701425001453318) -- (8.352705955580907, 1.265925477354458);
\node () at (0.1, -0.25) {$0q_0$};
\node () at (0.9944271909999158, 0.19721359549995793) {$1q_0$};
\node () at (1.8888543819998318, 0.6444271909999159) {$0$};
\node () at (2.7832815729997478, 1.0916407864998738) {$0$};
\node () at (3.6777087639996635, 1.5388543819998317) {$0$};
\node () at (1.0701425001453317, -0.49253562503633297) {$1$};
\node () at (1.9645696911452477, -0.045322029536375014) {$1$};
\node () at (2.8589968821451635, 0.4018915659635829) {$1q_0$};
\node () at (3.7534240731450796, 0.849105161463541) {$0$};
\node () at (4.647851264144995, 1.2963187569634989) {$0q_2$};
\node () at (2.0402850002906634, -0.7350712500726659) {$0$};
\node () at (2.9347121912905796, -0.28785765457270795) {$0$};
\node () at (3.8291393822904953, 0.15935594092724997) {$0$};
\node () at (4.723566573290411, 0.6065695364272079) {$0q_0$};
\node () at (5.617993764290326, 1.0537831319271658) {$1$};
\node () at (3.0104275004359953, -0.9776068751089988) {$1$};
\node () at (3.904854691435911, -0.5303932796090409) {$1$};
\node () at (4.799281882435826, -0.08317968410908294) {$1$};
\node () at (5.693709073435743, 0.36403391139087504) {$1$};
\node () at (6.588136264435658, 0.8112475068908329) {$1$};
\node () at (3.980570000581327, -1.2201425001453319) {$1$};
\node () at (4.874997191581243, -0.7729289046453738) {$1$};
\node () at (5.769424382581159, -0.3257153091454159) {$1$};
\node () at (6.663851573581074, 0.12149828635454207) {$1$};
\node () at (7.55827876458099, 0.5687118818545) {$1$};
\end{tikzpicture}
\end{center}
\vspace{-0.5cm}
\caption{One of the grids is used for computation, here we use $\{gc^md^n \;|\; m,n \in \N\}$ with $d$ as the ``space direction'' and $c$ as the ``time direction''. The example Turing machine shown is just an adding machine over a binary alphabet, in the construction we replace this by the machine that never halts if and only if the configuration is in $Y$.}
\label{fig:compute}
\end{figure}

\item Finally, we check that the map that sends a configuration to the element of $\{0,1\}^{\N}$ that is written at $1_{\Gamma_\Z (G)}$ is surjective onto $X$, and so it is a factor map. This is done by checking that the bushes can be chosen so that if $g \langle \bB(g) \rangle = h\langle \bB(h) \rangle$ and $\bB(g) = \bB(h)$, then $g=h$. Indeed, if this is the case then the bushes of different elements $g$ using the same set of directions are pairwise disjoint, so for each $g$, the bush at $g$ may be used entirely for the computation of $g$.
\end{enumerate}

We are now ready for the proof in the general case of a graph product. The main difference between a RAAG and a general amenable group is that (because the group might not contain any elements of infinite order) we need to replace the straight lines $\langle a \rangle$ by paths that are chosen by an SFT (this is indeed possible, using the fact that every infinite group admits a translation-like action of $\Z$ \cite{Se14}).

In the amenable case, when we use a direction at $g$, we will consider the entire bush (the coset $g\bB(g)$) ``used'', and ensure that other elements $g$ have essentially disjoint bushes (more precisely, we will allow a bounded number of reuses). In the case of a nonamenable group, a paradoxical decomposition of a group allows every node to have a separate path. We can use an SFT to mark a paradoxical decomposition as in \cite{BaSaSa26}.

Finally, when the groups do not necessarily have decidable word problem, we need to work with a relative action instead of an actual action. It turns out that this does not really make a difference in the proofs.

\subsection{Path subshifts}

Let us assume $G = (V,E,(\Gamma_u)_{u \in V})$ where $V = I \sqcup J$, with $\Gamma_i$ amenable for every $i \in I$, and $\Gamma_j$ non-amenable for every $j \in J$. For $v \in V$, denote $\lk(v)$ its link, i.e. $\lk(v) = \{u \in V \;|\; \{u,v\} \in E\}$. Note that $v \notin \lk(v)$.

Let $K_v \Subset \Gamma_v$. Define for all $i \in I, \Sigma_i = K_i^2 \times \{\bb\}$, for all $j \in J, \Sigma_j = K_j^3 \times \{\br,\bc\}$ and $\Sigma = (\prod_{i \in I} \Sigma_i) \times (\prod_{j \in J} \Sigma_j)$.  For each $u \in V$, write $\col : \Sigma_u \to \{\bb,\br,\bc \}$ the projection on the last coordinate, and $\overline{\symb{t}}$ the opposite color of color $\symb{t}$ (that is $\overline{\br} = \bc, \overline{\bc} = \br$ and $\overline{\bb} = \bb$). Let $S_v \Subset \Gamma_v$ finite generating sets of the $\Gamma_v$. Let $S = \bigcup_{v \in v} S_v$. 

Define the path subshift $\cP$ on alphabet $\Sigma$ by demanding the following conditions of every $((\rho_i = ((\ell_i, r_i),\bb)_{i \in I},(\rho_j = (({\ell_j}^{\br} ,{\ell_j}^{\bc} ,r_j),c_j))_{j \in J}) \in \cP$ at every $g \in \Gamma$.
\begin{enumerate}
	\item For every $i \in I, r_i (g \ell_i (g)) = \ell_i (g)^{-1}$.
	\item For every $i \in I, \ell_i ( g r_i (g)) = r_i (g)^{-1}$.
	\item For every $j \in J, c_j (g {\ell_j}^{\br} (g))) = \br$ and $r_j (g {\ell_j}^{\br}(g)) = {\ell_j}^{\br} (g)^{-1}$.
	\item For every $j \in J, c_j (g {\ell_j}^{\bc}(g))) = \bc$ and $r_j (g {\ell_j}^{\bc}(g)) = {\ell_j}^{\bc} (g)^{-1}$.
	\item For every $j \in J, \ell_j^{c_j (g)}(g r_j (g)) = r_j (g)^{-1}.$
	\item For every $\{u,v\} \in E$, for every $a \in S_v, \rho_u (g) = \rho_u (g a)$.
\end{enumerate}
It is clear that $\cP$ is of finite type.

For $\rho \in \cP$ and $v \in V$, we define a path by following the left edges of the opposite color of a vertex:
\[ \gamma_g^v (n,\rho) = \begin{cases}
1_\Gamma &\mbox{ if } n=0\\
\gamma_g^v (n-1,\rho) \ell_v ^{\overline{c_v(g)}}(g \gamma_g^v (n-1,\rho)) &\mbox{ otherwise}. \end{cases}\] 

Note that for every $h \in \Gamma_v$, \[\gamma_{h^{-1}}^v (n,\rho) = \gamma_{1_\Gamma}^v (n,h\rho).\]
This is a simple calculation, but one may also avoid the calculation with the correct mental picture: If we visualize the configurations on the right Cayley graph, the shift by $h$ moves the vertex $1_\Gamma$ to $h$, and carries the configuration by the unique graph automorphism (when we include the generators as edge labels). The equality comes from that fact that the definition of $\gamma_g^v (n,\rho)$ can be seen in terms of local movement of a ``reading head'' on the configuration from initial position $g$. The configuration $(n,\rho)$ relative to position $h^{-1}$ and the configuration $(n,h\rho)$ relative to position $1_\Gamma$ are the same, by the definition of the shift map.

Note also that for every $u \in \lk(v), h \in G_u$,  \[\gamma_{1_\Gamma}^v (n,\rho) =  \gamma_{1_\Gamma}^v (n,h\rho).\]
This in turn follows by an easy induction from the last item of the definition of the subshift.

\begin{claim}
	\label{cl:PathSubshift}
	For a suitable choice of the $K_v \Subset \Gamma_v$, the path subshift contains a configuration $\rho$ that is such that for every $v \in V$, and for every $g \in \Gamma$, the path $(n \in \N \mapsto \gamma_g^v(n,\rho))$ is injective.
\end{claim}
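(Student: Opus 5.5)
The plan is to build $\rho$ one vertex at a time. For each $v$ we first construct a configuration $\sigma_v \in \Sigma_v^{\Gamma_v}$ on the vertex group alone, and then extend it to $\Gamma = \Gamma(G)$ by making it constant along the link directions. Concretely, every $g \in \Gamma$ has a left coset $g\Gamma_v$. For each such coset we fix a coset representative, and we place on the coset a translate of $\sigma_v$, so that $\rho_v(gh) = \sigma_v(h)$ for the chosen representative $g$ and all $h \in \Gamma_v$.

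Condition 6 asks that $\rho_v(g) = \rho_v(ga)$ for $a \in S_u$ with $u \in \lk(v)$. To meet it, the representatives must be chosen compatibly. Since $\Gamma_u$ commutes with $\Gamma_v$, we have $ga\Gamma_v h = g h a \Gamma$-wise, and the translate placed on $ga\Gamma_v$ should be the $a$-translate of the one on $g\Gamma_v$. The precise form needed is that $\rho_v(gha) = \rho_v(gh)$.

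To achieve this, we choose representatives for the double cosets $\Gamma/\langle \Gamma_v \cup \bigcup_{u \in \lk(v)} \Gamma_u\rangle$, and within each double coset we use the direct product structure $\Gamma_v \times \langle \lk(v)\rangle$. That is, we define $\rho_v(g\,k\,h) = \sigma_v(h)$ for $k \in \langle \lk(v)\rangle$ and $h \in \Gamma_v$. This is well defined by the normal form theory of graph products: the subgroup generated by $v$ together with its link (its star) is $\Gamma_v \times \Gamma(\lk(v))$. Conditions 1--5 only involve steps inside $\Gamma_v$, so they are inherited from $\sigma_v$, and the path $\gamma^v_g$ stays inside $g\Gamma_v$. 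It therefore suffices to build $\sigma_v$ on $\Gamma_v$ such that every path is injective.

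\textbf{Amenable $v$.} Here $\Gamma_v$ is infinite and finitely generated, so by Seward's theorem \cite{Se14} it admits a translation-like action of $\Z$ whose orbits are bi-infinite injective paths with steps in a finite set $K$. Put $K_v = K$, set $r(g) = $ the step to the successor and $\ell(g) = $ the step to the predecessor, with inverses. Then conditions 1--2 hold, and $\gamma$ follows $\ell$, so it walks backwards along an injective bi-infinite orbit and is injective. (One should check the orientation convention of $\gamma$ against $\ell$; if needed we swap the roles of $\ell$ and $r$.)

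\textbf{Non-amenable $v$.} Here we need a $2$-to-$1$ structure: each $g$ has two ``left'' pointers $\ell^{\br}(g), \ell^{\bc}(g)$ to elements of colours $\br$ and $\bc$ respectively, whose $r$-pointer returns to $g$. By the Hall marriage theorem and non-amenability (doubling condition), exactly as in \cite{BaSaSa26}, there is a map $\phi:\Gamma_v\to\Gamma_v$ with $g^{-1}\phi(g)$ in a finite set, which is exactly $2$-to-$1$. We colour the two preimages of each $g$ by $\br,\bc$ and set $r(g) = g^{-1}\phi(g)$, with the $\ell$ pointers going to the preimages. Conditions 3--5 then hold.

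The path $\gamma_g$ repeatedly moves from the current point $x$ to its preimage of the colour opposite to $c(g)$, the colour of the starting point. This colour is fixed along the path, because $\overline{c_v(g)}$ is evaluated at $g$. The path is therefore a backward orbit of $\phi$, and it can only fail to be injective if it enters a cycle of $\phi$. We must rule out cycles of $\phi$.

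This is the main obstacle. A $2$-to-$1$ map with bounded displacement might a priori have periodic points. To handle it, we take the $2$-to-$1$ map from a paradoxical decomposition and modify it so that the functional graph of $\phi$ is a forest, each component being a $3$-regular tree oriented towards an end.

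One approach is to compose with a Seward translation-like $\Z$-action, or to use Whyte's theorem that non-amenable groups admit translation-like actions of free groups. With a translation-like free action of $F_2 = \langle a,b\rangle$ (free, bounded displacement), we define $\phi$ on each orbit via the tree of $F_2$: inside the free group, the map $w \mapsto w$ with its last letter deleted in a suitable $2$-to-$1$ rooted-at-infinity version, for instance in the $3$-regular tree obtained as a horofunction tree. Such a map has no cycles since it strictly decreases a horofunction.

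We then transport this map to $\Gamma_v$ through the orbit bijections, so every backward path is injective. $K_v$ is taken to be the finite set of displacements of the resulting maps.
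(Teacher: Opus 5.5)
\textbf{There is a gap in your non-amenable case.} Your reduction to a single vertex group is correct, and so is your amenable case. For the extension, the paper pulls back along the retraction $\pi_v:\Gamma(G)\to\Gamma_v$ that kills the other vertex groups, i.e.\ $\rho_v=\sigma_v\circ\pi_v$. This satisfies condition 6 for every $a\in\Gamma_u$ with $u\neq v$, and needs no coset representatives. Your version via cosets of the star subgroup $\Gamma_v\times\Gamma(\lk(v))$ also works; you mean left cosets, not double cosets.

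In the non-amenable case you state that cycles of $\phi$ must be ruled out. You then abandon the paradoxical configuration for a construction you never carry out. A translation-like action of $F_2$ has orbits that are copies of the $4$-regular tree. There, ``delete the last letter'' is $3$-to-$1$ and has a root, and ``move one step toward a fixed end'' is $3$-to-$1$. Neither is $2$-to-$1$. Producing an exactly $2$-to-$1$, acyclic, bounded-displacement map on $F_2$ would need extra input, for instance a bi-Lipschitz bijection between the $4$-regular and $3$-regular trees, and you do not supply it. As written, the non-amenable case is therefore not proved.

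\textbf{The obstacle does not exist, and your Hall-theorem configuration already works.} You noted yourself that the colour $\overline{c_v(g)}$ is fixed along the path; that is the key. Write $g_k=g\gamma^v_g(k,\rho)$ and $\phi(x)=x\,r_v(x)$. Conditions 3--4 give $c_v(g_k)=\overline{c_v(g)}$ for $k\ge 1$, and $\phi(g_{k+1})=g_k$ for $k\ge 0$. Suppose $g_a=g_b$ with $a<b$. Applying $\phi^a$ gives $g=g_{b-a}$. This is impossible, because $g$ has colour $c_v(g)$ while $g_{b-a}$ has colour $\overline{c_v(g)}$. So a backward path can run along part of a cycle of $\phi$ but can never close it, and acyclicity is unnecessary.

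A similar colour argument shows that $(g,n)\mapsto g\gamma^v_g(n+1,\rho)$ is injective on $\Gamma_v\times\N$. Two paths meeting would force two distinct-coloured preimages of the same point to coincide. This injectivity is exactly the property of the paradoxical subshift of \cite{BaSaSa26} that the paper invokes.
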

\begin{proof}
	For every $v \in V$, denote $\pi_v$ the canonical projection $\Gamma(G) \to \Gamma_v$ defined by mapping $\pi_v(g) = g$ for $g \in \Gamma_v$, $\pi_v(g) = 1_{\Gamma(G)}$ for $g \in \Gamma_u$ when $u \neq v$, and extending uniquely. This gives a well-defined homomorphism, and of course the restriction $\pi_v|_{\Gamma_v} : \Gamma_v \to \Gamma_v$ is the identity map.
	
	Seward showed in \cite[Theorem 4.1]{Se14} that every finitely generated infinite group $\Gamma$ has a translation like action of $\Z$, i.e.\ an action $\ast$ that is free and such that for all $t^n \in \Z = \langle t \rangle$, the set $\{g^{-1}(g \ast t^n) \;|\; g \in \Gamma\}$ is finite.
	
	 In particular, for every $i \in I$ there is a translation like action $\ast_i$ of $\Z$ on $\Gamma_i$.

	Now, define $K_i = \{g^{-1}(g \ast_i t) \;|\; g \in \Gamma_i\}$ and define $\rho_i|_{\Gamma_i}$ by
	 \[\forall g \in \Gamma_i, \rho_i(g) = (g^{-1} (g \ast_i t), g^{-1} (g \ast_i t^{-1})),\bb).\]
	 
	Then, for all $n \in \N$, for all $g \in \Gamma_i$, we have $g \gamma_g^i(n,\rho) = g \ast_i t^n $. Since the action is free, the path is injective. Extend $\rho_i|_{\Gamma_i}$ by the trivial extension, that is, define $\rho_i(g) = \rho_i|_{\Gamma_i}(\pi_{A_i}(g))$ and now, for all $g \in \Gamma$, the paths $g \gamma_g^i(n,\rho) = g \ast_i t^n$ are also injective.
	
	Now, it was proven in \cite{BaSaSa26} that for every  $j \in J$, there exists $K_j \Subset \Gamma_j$ such that the paradoxical subshift $\rho_j|_{\Gamma_j}$ on $\Gamma_j$ is non-empty. Now define $\rho_j$ as the trivial extension of any configuration of the paradoxical subshift on $\Gamma_j$, that is, define $\rho_j(g) = \rho_j|_{\Gamma_j}(\pi_{B_j}(g))$ for every $j \in J$. Since the map $(g,n)\in \Gamma_j \times \N \mapsto g\gamma_g^j(n+1,\rho)$ is already injective, it follows that the path $n \in \N \mapsto \gamma_g^j(n,\rho)$ is injective. 
	
	It is straightforward to check that $\rho \in \cP$.
\end{proof}

The previous claim shows that it is possible to find infinite paths beginning at every point of the group in every direction. In the amenable directions, these paths are pairwise disjoint, but two distinct elements of $\Gamma(G)$ may have the same path. By contrast, in the non-amenable directions, two distinct elements $g,h$ of $\Gamma(G)$ have disjoint paths (if we discount the elements $g, h$ themselves). This is why two colors are necessary in the non-amenable directions - one color is used for the root of the path and another for the rest.

In general, the proof is adapted from the special case of RAAGs (whose outline was given in the beginning of the section) to general graph products in the following way. At each $g$, we choose a set of vertices $\bB(g)$. We write $\mathcal{B}(g)$ the set of elements that can be reached by beginning at $g$ and following a path in one of the directions of $\bB(g)$. The point is that we want that if the intersection $\mathcal{B}(g) \cap \mathcal{B}(h)$ is non-empty and $\bB(g) = \bB(h)$, then $g=h$. If this is the case, then by having as many layers of computation as there are possible subgraphs $\bB(g)$, $g$ can use the entire set $\mathcal{B}(g)$ to do its computation

We have shown that we can choose disjoint paths at every point in the non-amenable directions, and so the non-amenable vertices may be used in $\bB(h)$ for every $h$ without overlap. In the case of an amenable vertex $A_i$, we will need to be more careful. The idea in this case is that we only include $A_i$ in $\bB(g)$ if $h$ cannot be written in reduced form so that it ends with an element of $A_i$. Namely, in this case it happens that $h$ is the \emph{only} element of $\mathcal{B}(h)$ that cannot be written in reduced form so that it ends with an element of $\bB(h) \cap \{A_i \;|\; i \in I\}$. We explain this in detail in Claim~\ref{claim:NonEmpty}.

\subsection{Bushes}

  A colored edge is an ordered pair $((u,c_u),(v,c_v))$ where $\{u,v\} \in E, c_u,c_v \in \{\br,\bc,\bb\}$. Denote by $\cE$ the set of colored edges. A colored vertex is an element of $V \times \{\br,\bc,\bb\}$. Denote by $\cV$ the set of colored vertices.
  
  Any function defined on edges (respectively vertices) is extended trivially to colored edges (respectively colored vertices) by ignoring the color. Let also $\openbox$ be a blank symbol. Let $A = \Sigma \times 2^V \times 2^{\cE} \times 2^{\cV \times 2^V} \times (\Omega \cup \{ \openbox \} )^{E \times 2^V}$, and define the bush subshift $\mathcal S$ on alphabet $A$ by demanding that the following conditions hold for every $\bS = (\rho,\bB,\bD,\bI,\bL) \in \mathcal S$ at every $g \in \Gamma$.
\begin{enumerate}
	\item $\rho \in \cP$.
	\item $\bB(g)$ contains at least two nodes and the induced subgraph of $(V, E)$ on these nodes is connected.
	\item For every $v \in V$, for every $a \in S_v, \bB(g a) \cap \bB(g) \cap \lk(v) \neq \emptyset$.
	\item If $u,v \in \bB(g)$ and $\{u,v\} \in E$, then $((u,\overline{c_u (g)}),(v,\overline{c_v (g)})) \in \bD(g)$.
	\item If $((u,c_u),(v,c_v)) \in \bD(g)$, then $((u,c_u),(v,c_v)) \in \bD(g \ell_u^{c_u} (g))$ and\\
	 $((u,c_u),(v,c_v)) \in \bD(g \ell_v^{c_v} (g))$.
	\item If $u \in \bB (g)$, then $((u,\overline{c_u (g)}),\bB(g)) \in \bI(g)$.
	\item If $((u,c_u),C) \in \bI(g)$ then $((u,c_u),C) \in \bI(g \ell_u^{c_u} (g))$.
	\item If $((u,c_u),(v,c_v)) \in \bD(g)$, then for any $C \subset V$ such that $\{u,v\} \subset C, \bL(g \ell_u^{c_u} (g))(\{u,v\},C) = \bL(g \ell_v^{c_v} (g))(\{u,v\},C)$.
	\item If $((u,c_u),C) \in \bI(g)$ then for any  $u' \in \lk(u)$, for any $a \in S_{u'}$, for any $((u,c_u),C') \in \bI(g a)$ and for any $\{u,v\} \subset C, \{u,v'\} \subset C'$,\\
	$\bL(g a)(\{u,v'\},C')(1_\Gamma) = \bL(g)(\{u,v\},C)(a^{-1})$.
\end{enumerate}
The role of the bush subshift is the following. $\bB$ chooses a bush, that is a set of directions at every vertex $g$, which is supposed to satisfy that if $g \langle \bB(g) \rangle = h \langle \bB(h) \rangle$ and $\bB(g) = \bB(h)$ then $g =h$. Hence, the layer of index $\bB(g)$ of the bush subshift on the subset $g \langle \bB(g) \rangle$ may be used entirely by $g$ for computation without interference.

$\bD(g)$ identifies the planes that are entirely contained in $\bB(g)$ on which we will later be able to embed the Wang tiles that will do the actual computation (rule 2 ensures that there will be at least one plane on which to do computation). 

$\bI$ identifies the edges of $g \langle \bB(g)\rangle$, that is the paths where it stays at bounded distance from another bush (if $u$ commutes with some group $G_v$, then the paths $g \gamma_g^v (n,\rho)$ and $g u \gamma_{gu}^v(n,\rho)$ always stay close). By rule 3, we know that the bushes corresponding to two adjacent elements will always share an edge, so that synchronization does happen.  

Finally, $\bL$ contains the layer on which configurations of $\Gamma \curvearrowright X$ will be stored. By synchronizing $\bL$ along diagonals of any plane contained in the bush (rule 8), we ensure that the same configuration of $\Gamma \curvearrowright X$ is written along any edge of $g \langle \bB(g)\rangle$. Rule 9 ensures that the bushes that have two paths that stay close are synchronized.


For proving that $\mathcal{S}$ is non-empty, we will use some basic results about words in graph products.

A \emph{writing} of an element $g \in \Gamma(G)$ is a sequence $g_1, \cdots, g_n$ such that each $g_i$ belongs to some $\Gamma_v$ and $g = g_1\dots g_n$. The elements $g_i$ of the sequence are called \emph{syllables}.

Clearly, the element represented by the writing $g_1\dots g_n$ is not modified by permuting $g_i$ and $g_{i+1}$ if $g_i \in \Gamma_u, g_{i+1} \in \Gamma_{v}$ and $\{u,v\} \in E$. Similarly, the element is not modified by replacing $g_i, g_{i+1}$ by their product $g_i g_{i+1}$ if $g_i, g_{i+1} \in \Gamma_v$. Finally, the element is not modified by deleting $g_i$ if $g_i = 1_{\Gamma_v}$.

We say that a writing $g = g_1 \dots g_n$ is \emph{graphically reduced} if it cannot be shortened by applying the three operations above.

\begin{definition}
If $g \in \Gamma(G)$, we define $\tail(g)$ as the set of vertices $v \in V$ such that there exists a graphically reduced writing of $g$ ending with an element of $\Gamma_v$.
\end{definition}

\begin{lemma}
The set $\tail(g)$ is a clique.
\end{lemma}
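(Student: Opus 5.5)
We must show that for distinct $u,v \in \tail(g)$ we have $\{u,v\} \in E$. The tool will be Green's normal form theorem for graph products. It says that two graphically reduced writings of the same element have the same length. It also says that one can be transformed into the other using only syllable shuffles, i.e.\ swaps of adjacent syllables $g_i \in \Gamma_a$, $g_{i+1} \in \Gamma_b$ with $\{a,b\} \in E$. We take this as the standard result.

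Fix a reduced writing $g = g_1 \cdots g_n$ ending with $g_n \in \Gamma_u$, and another reduced writing ending with a syllable in $\Gamma_v$, where $u \neq v$. By the normal form theorem, the second writing is a permutation of the first, obtained through shuffles. So some syllable $g_k \in \Gamma_v$ of the first writing is moved to the last position.

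We track the relative order of $g_k$ and $g_n$. Initially $g_k$ is before $g_n$, and at the end $g_k$ is after $g_n$. The relative order of two syllables changes only when they are swapped while adjacent. Hence at some step $g_k$ and $g_n$ were exchanged by a shuffle. This requires $\{v,u\} \in E$, because the vertices are distinct and shuffles only swap syllables of adjacent vertex groups. Therefore every pair of distinct vertices of $\tail(g)$ is adjacent, and $\tail(g)$ is a clique.

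The main subtlety is the identification of syllables: "the syllable $g_k$" must make sense across the sequence of shuffles. This is fine, because shuffles merely permute positions and never merge or alter syllables. The rigid identification is guaranteed by the normal form theorem, which says reduced writings differ only by shuffles with no merging. If one prefers to avoid citing it in that form, an alternative is a direct argument: assume $\{u,v\} \notin E$. Then $g = h_1 a = h_2 b$ with $a \in \Gamma_u$, $b \in \Gamma_v$ nontrivial last syllables. Consider the element $g a^{-1} = h_1$, whose length is $n-1$, and show that it still ends in $\Gamma_v$, contradicting reducedness via a length count. Either route rests on the normal form theorem, which is the one external ingredient.
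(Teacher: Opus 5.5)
Your proof is correct and follows essentially the same route as the paper: both use the normal form theorem (the paper cites Hermiller--Meier) to connect the two reduced writings by syllable shuffles. Both then observe that the last syllable of one writing must at some point be swapped with the syllable that ends up last, which forces $\{u,v\}\in E$. Your explicit tracking of the relative order of $g_k$ and $g_n$ is a slightly more careful rendering of the paper's step, and the unfinished alternative argument at the end is not needed.
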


\begin{proof}
Suppose $v,v' \in \tail(g)$. Let $w$ be a writing of $g$ ending with an element of $\Gamma_v$, and $w'$ one ending with an element of $\Gamma_{v'}$. Then one can turn $w$ into $w'$ by swapping adjacent commuting group elements in the writing \cite{HeMe95}. In other words, there is a sequence of writings $w_0, w_1, \ldots, w_k$ of $g$ where for all $i$ we can write $w_i = gabh, w_{i+1} = gbah$ where $a \in \Gamma_u, b \in \Gamma_{u'}$ with $(u, u') \in E$.

In particular, the rightmost syllable of $w_0$ corresponds some syllable in $w_k$, and it was moved there by pairwise swaps of elements coming from commuting groups. At some point, it thus had to swap with the rightmost syllable of $w_k$. This means $(v, v') \in E$.
\end{proof}

In the case of a RAAG, the clique $\tail(g)$ corresponds to the rightmost clique in the normal form described in \cite{Va94}.
  




\begin{claim}
\label{claim:NonEmpty}
If $G$ is atomic, then the $\Gamma(G)$-subshift $\mathcal S$ is a non-empty SFT.
\end{claim}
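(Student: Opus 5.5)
Finiteness of type is immediate: every rule only compares symbols at $g$ with those at $g\ell$, $gr$ or $ga$, where $\ell,r$ range over the finite sets $K_v$ and $a$ over the finite sets $S_v$. So the work is non-emptiness. I will build an explicit configuration $\bS=(\rho,\bB,\bD,\bI,\bL)$ in four steps.

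\textbf{Paths.} Take for $\rho$ the configuration of Claim~\ref{cl:PathSubshift}. In that configuration the path $\gamma^v_g$ moves inside the coset $g\Gamma_v$ along the translation-like action when $v\in I$, or along the paradoxical tree when $v\in J$.

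\textbf{Bushes.} Set
\[\bB(g)=V\setminus \tail_I(g), \qquad \tail_I(g)=\tail(g)\cap I .\]
So $\bB(g)$ keeps every non-amenable vertex, and removes only the amenable vertices that can end a reduced writing of $g$. Since $\tail(g)$ is a clique, $\tail_I(g)$ is an amenable clique. If $\tail_I(g)\neq V$, Lemma~\ref{lem:GraphTheory} applies: $V\setminus\tail_I(g)$ is connected and has at least two points, so rule 2 holds. The case $\tail_I(g)=V$ cannot occur, because then $G$ would be an amenable clique, which atomicity forbids.

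Rule 3 is the step I expect to be the main obstacle. Fix $a\in S_v$. I compare $\tail(ga)$ with $\tail(g)$ by cases:
\begin{itemize}
\item If $v\in\tail(g)$, right multiplication by $a$ only changes the last $\Gamma_v$-syllable.
\item Otherwise, $\tail(ga)$ is contained in $\{v\}$ together with those elements of $\tail(g)$ that commute with $v$.
\end{itemize}
In either case $\tail_I(g)\cup\tail_I(ga)\subseteq C\cup\{v\}$ for an amenable clique $C\subseteq\lk(v)\cup\{v\}$. I then need a vertex of $\lk(v)$ lying outside this set.
\begin{itemize}
\item If some non-amenable $u\in\lk(v)$ exists, it always lies in both bushes and we are done.
\item Otherwise I use Lemma~\ref{lem:GraphTheory} on the clique $\tail_I(g)\cup\tail_I(ga)$, where it is a clique. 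The vertex $v$ must have a neighbour outside it; otherwise removing $\tail_I\setminus\{v\}$ would isolate $v$ and give a disconnecting amenable clique.
\end{itemize}
The degenerate cases, where $\lk(v)$ is empty or everything is amenable, have to be handled by the atomicity hypotheses. This counting of tails is the delicate part.

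\textbf{Planes and edges.} Define $\bD(g)$ as the closure of the required set of colored edges under the forward moves of rule 5. Concretely, $((u,c_u),(w,c_w))\in\bD(h)$ iff there is $g$ with $u,w\in\bB(g)$ and $h$ reached from $g$ by the grid of the two commuting paths. Define $\bI$ similarly by forward closure along single paths. Rules 4--7 then hold by construction.

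\textbf{Data layer and consistency of $\bL$.} It remains to fill $\bL$ so that rules 8 and 9 hold. I will take $\bL\equiv\openbox$ everywhere, or any constant symbol of $\Omega$. Every equality demanded in rules 8--9 is then trivially satisfied. This works because the subshift $\mathcal S$ itself imposes no content constraint on $\bL$; the computation constraints come later.

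So the only genuine work is rule 2 and above all rule 3, which both reduce to Lemma~\ref{lem:GraphTheory} together with the behaviour of $\tail$ under right multiplication by a single syllable.
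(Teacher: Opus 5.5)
Your overall plan agrees with the paper: the same $\rho$, the same $\bB$, and forward closures for $\bD$ and $\bI$. Replacing the paper's orbit data in $\bL$ by a constant is fine for non-emptiness alone, since rules~8 and~9 are pure equalities; the paper fills $\bL$ with the $y^{(g)}$ only because it reuses this configuration in Claim~\ref{claim:Surj}. (Take the constant to be a constant function in $\Omega$, e.g.\ identically $0$, since rule~9 compares its values at $1_\Gamma$ and at $a^{-1}$.)

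The genuine gap is in rule~3. Your containment $\tail_I(g)\cup\tail_I(ga)\subseteq C\cup\{v\}$ with $C\subseteq\lk(v)\cup\{v\}$ is false, because $\tail_I(g)$ need not be anywhere near $v$. Take the RAAG on the $5$-cycle with vertices $1,\dots,5$, let $v=1$, let $c$ generate $\Gamma_3$, and let $a\in S_1$. For $g=c$ one has $\tail(g)=\{3\}$ and $\tail(ga)=\{1\}$. Your first bullet also overlooks cancellation: for $g=ca^{-1}$ one has $\tail(g)=\{1\}$ but $\tail(ga)=\{3\}$, a new vertex not adjacent to $v$. In both cases the union is $\{1,3\}$, which is not a clique. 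So Lemma~\ref{lem:GraphTheory} cannot be applied to it, and this is exactly the situation (together with your ``degenerate cases'') that you leave open.

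The missing fact is that right multiplication by $\Gamma_v$ does not change the tail inside the link: $\tail(ga)\cap\lk(v)=\tail(g)\cap\lk(v)$ for all $a\in\Gamma_v$. To see this, write $g=hb$ with $b\in\Gamma_v$ and $v\notin\tail(h)$. For $b'\in\Gamma_v\setminus\{1\}$ and a reduced writing $w$ of $h$, the writing $wb'$ is reduced. Checking which syllables can be shuffled to the end gives $\tail(hb')=\{v\}\cup(\tail(h)\cap\lk(v))$. Applying this to $b'=b$ and $b'=ba$ (trivially when one of them is $1$) shows both sides equal $\tail(h)\cap\lk(v)$.

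Rule~3 then only needs a single $u\in\lk(v)\setminus\tail_I(g)$. Lemma~\ref{lem:GraphTheory}, applied to the one amenable clique $\tail_I(g)\neq V$, supplies it:
\begin{itemize}
\item if $v\notin\tail_I(g)$, then $v$ lies in the connected set $V\setminus\tail_I(g)$ of size at least two, so it has a neighbour there;
\item if $v\in\tail_I(g)$, the lemma gives $v$ a neighbour outside $\tail_I(g)$.
\end{itemize}
Such a $u$ lies in $\bB(g)$, and $u\notin\tail_I(ga)$ by the fact above, so $u\in\bB(g)\cap\bB(ga)\cap\lk(v)$. No degenerate cases remain. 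This is how the paper argues; its auxiliary inclusion $\tail(ga)\subset\tail(g)\cup\{v\}$ is likewise only correct after intersecting with $\lk(v)$.
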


\begin{proof}
  It is clear by the definition that $\mathcal S$ is of finite type. Let $\rho \in \cP$ as in Claim~\ref{cl:PathSubshift}, i.e.\ such that the paths along every direction are injective. Let $x \in X$, and for all $g \in \Gamma, y^{(g)} \in Y$ such that $(y_n^{(g)}(1_\Gamma))_{n \in \N} = g^{-1} x$. 
  
  
   Then define $\bB \in {2^V}^\Gamma$ by $\bB(g) = (V\setminus \tail(g)) \cup \{B_j | j \in J\}$. 
  
  Define $\bD(g)$ by $\forall ((u,c_u),(v,c_v)) \in \cE, ((u,c_u),(v,c_v)) \in \bD(g)$ if and only if $\exists g_0 \in \Gamma,n,m \in \N$ such that $\{u,v\} \subset \bB(g_0), c_u = \overline{c_u (g_0)}, c_v = \overline{c_v (g_0)}$ and $g = g_0 \gamma_{g_0}^u (n,\rho) \gamma_{g_0 \gamma_{g_0}^u (n,\rho)}^v (m,\rho)$. 

Define $\bI$ by $\forall ((u,c_u),C) \in \cV \times 2^V, ((u,c_u),C) \in \bI(g)$ if and only if $\exists g_0 \in \Gamma, n \in \N, c_u = \overline{c_u (g_0)}$ such that $C = \bB(g)$ and $g = g_0 \gamma_{g_0}^v (n,\rho)$. 
  
  Define $\bL$ by $\bL (g \gamma_g^u (n,\rho) \gamma_{g \gamma_g^u (n,\rho)}^v (m,\rho))(\{u,v\},\bB(g)) = y^{(g)}_{n+m-1}$ for $\{u,v\} \in E, \{u,v\} \subset \bB(g), n+m\geq1$ and $\bL(h)(\{u,v\},C) = \openbox$ everywhere else.   
  
  Note that $\bL$ is well-defined, because if there exists $g \gamma_g^u (n,\rho)\gamma_{g \gamma_g^u (n,\rho)}^v (m,\rho) = g' \gamma_{g'}^{u'} (n',\rho)\gamma_{g' \gamma_{g'}^{u'} (n,\rho)}^v (m,\rho), \bB(g) = \bB(g')$ and $\{u,v\} = \{u',v'\}$, then since the bushes $g \langle \bB(g) \cap \{A_i \;|\; i \in I\} \rangle $ and $g' \langle \bB(g) \cap \{A_i \;|\; i \in I\} \rangle$ only have one point the last clique of which contains no element of $\bB(g) \cap \{A_i\;|\; i \in I\}$ (respectively $\bB(g') \cap \{A_i\;|\; i \in I\}$), it must be that $\Gamma_u,\Gamma_v,\Gamma_u'$ and $\Gamma_v'$ are non-amenable. But then the injectivity of $(n,g) \mapsto g \gamma_g^j (n,\rho)$ yields that $g = g'$ and it follows that $n=n', m = m'$, because the paths $n \in \N \mapsto \gamma_g^u (n,\rho)$ are injective by Claim~\ref{cl:PathSubshift}. 
  
 We argue that $(\rho,\bB,\bD,\bI,\bL) \in \mathcal S$.
\begin{enumerate}
  	\item This is part of the definition.
  	\item $\bB(g) = G \setminus (\tail(g) \cap \{G_i \;|\; i \in I\})$ is the complement of an amenable clique, so by Lemma~\ref{lem:GraphTheory}, $\bB(g)$ contains at least two nodes and is connected.
  	\item Let $g \in \Gamma,v \in V, a \in S_v$. There are two cases. If $v \in \bB(g)$, then since by the previous point $\bB(g)$ is connected and contains at least two nodes, there exists $u \in \bB(g) \cap \lk(v), u\neq v$. But then either $\Gamma_u$ is non-amenable, and so $u \in \bB(g) \cap \bB(g a) \cap \lk(v)$, or $u \notin \tail(g)$. In the latter case since $v \neq u$, it follows that $u \notin \tail(g a)$, and $u \in \bB(g) \cap \bB(g a) \cap \lk(v)$.
	
	 Otherwise if $v \notin \bB(g)$, then by Lemma~\ref{lem:GraphTheory}, there is $u \in \bB(g) \cap \lk(v)$. But $\tail(g a) \subset \tail(g) \cup \{v\}$, and therefore either $u \notin \tail(g a)$ or $u$ is non-amenable, so that $u \in \bB(ga)$. 
  	\item This follows from the definition of $\bD$ with $n = m = 0$.
  	\item This follows from the definition of $\bD$ and the fact that $\ell_u^{c_u} (g)$ and $\ell_v^{c_v} (g_0)$ commute.
  	\item This follows from the definition of $\bI$ with $n=0.$
  	\item This follows from the definition of $\bI$ with $g = g_0 \gamma_g^v (n,\rho)$ and $h = g_0 \gamma_{g_0}^v (n+1,\rho)$.
	 \item This follows from the definition of $\bL$ and the fact that $\bD(g)$ is empty on elements not of the form $g_0 \gamma_{g_0}^u (n,\rho) \gamma_{g_0 \gamma_{g_0}^u (n,\rho)}^v (m,\rho)$. Indeed, if $h = g_0 \gamma_{g_0}^u (n,\rho) \gamma_{g_0 \gamma_{g_0}^u (n,\rho)}^v (m,\rho)$ with $\{u,v\} \subset \bB(g_0)$, then the condition is verified at $\bL (h)(\{u,v\},\bB(g_0))$, and since $\bL (h)(\{u,v\},C) = \openbox$ whenever $C \neq \bB(g_0)$, the condition is also verified in this case.
  	\item Assume $g = g_0 \gamma_{g_0}^u (n,\rho)$ with $c_u = \overline{c_u (g_0)}$ and $C = \bB(g_0)$. Then by definition of $\bL$, we have $\bL(g)(\{u,v\},C)(a) = (a^{-1}g_0^{-1} x)_{n-1}$. Since $v$ is adjacent to $u$, it follows from commutativity that $g a = g_0 a \gamma_{g_0}^u (n,\rho) = g_0 a \gamma_{g_0 a}^u (n,\rho)$ by rule 6 of the path subshift. Finally, since $v$ and $u$ are adjacent, $c_u (g_0 a) = c_u (g_0) = \overline{c_u}$. 
	
	Hence, the definition of $\bL$ yields $\bL(g_0 a)(\{u,v'\},C')(1_\Gamma) = y^{(g_0 a)}_{n-1} (1_\Gamma) = (a^{-1}g_0^{-1} x)_{n-1}$. \qedhere
 \end{enumerate}
\end{proof}

 The previous claim shows that it is possible to stitch a bush at every element of $\Gamma$. These bushes will next be used for computation, to ensure that the configurations $\Omega^\omega$ on the paths starting at each $g$ are indeed in $Y$. Then, as we clarify below, item~8 and item~9 above already guarantee that every configuration encodes the orbit of a point in $X$. 

\subsection{(Relatively) effective actions and set representations}

We recall the set representation of an effective action from \cite{BaSaSa26}. First, consider the case where all vertex groups have decidable word problem. Let $\Gamma \curvearrowright X \subset \{0,1\}^\omega$ be an effectively closed action. We recall the notion of its set representation from \cite{BaSaSa26}:

\begin{definition}
\label{def:SetRepresentation}
If $S \Subset \Gamma$ is a finite generating set for the group $\Gamma$ and $\Gamma \curvearrowright X \subset \{0,1\}^\omega$ is an action, the corresponding \emph{set representation} of the action is
\[ \{ x \in (\{0,1\}^S)^\omega \;|\; (x_n(1_\Gamma))_{n \in \N} \in X \mbox{ and } \forall s \in S, (x_n(s))_{n \in \N} = s (x_n(1_\Gamma))_{n \in \N}\} \]
\end{definition}

\begin{lemma}[\cite{BaSaSa26}, Remark 2.7]
An action is effective if and only if its set representation is effectively closed for some generating set, in which case it is effectively closed for all generating sets.
\end{lemma}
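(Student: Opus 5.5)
We argue the two implications separately, since effectiveness is a property of the action and does not depend on the chosen generating set.

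\emph{Effective action implies effectively closed set representation.} Suppose $\Gamma \curvearrowright X \subset \{0,1\}^\omega$ is effective, so $X$ is $\Pi^0_1$ and each $s \in S$ acts by a computable homeomorphism. The set representation is the intersection of two kinds of sets:
\[ \{x : (x_n(1_\Gamma))_n \in X\} \quad\text{and}\quad \{x : (x_n(s))_n = s\,(x_n(1_\Gamma))_n\} \text{ for } s \in S. \]
The first set is the preimage of a $\Pi^0_1$ set under a computable coordinate projection, so it is $\Pi^0_1$. For each of the second sets we enumerate its complement: a point lies outside it exactly when some prefix of $s\,(x_n(1_\Gamma))_n$, produced by the machine computing $s$ from a finite prefix of $x(1_\Gamma)$, disagrees with the corresponding prefix of $x(s)$. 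We can enumerate all such pairs of cylinders, so each of these sets is $\Pi^0_1$. A finite intersection of $\Pi^0_1$ sets is $\Pi^0_1$. This argument works for every finite generating set $S$.

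\emph{Effectively closed set representation implies effective action.} Suppose the set representation $R$ is effectively closed for some generating set $S$. The projection $\pi_{1}: R \to X$, $x \mapsto (x_n(1_\Gamma))_n$, is a bijection, because every coordinate $x(s)$ is determined by $x(1_\Gamma)$. We then need two facts.
\begin{enumerate}
\item $X$ is effectively closed. Since $R$ is compact and $\Pi^0_1$, a cylinder $[w]$ is disjoint from $X$ exactly when $R \cap \pi_1^{-1}[w] = \emptyset$. This is semi-decidable by compactness: we enumerate complement cylinders of $R$ until they cover the clopen set $\pi_1^{-1}[w]$.
\item Each $s \in S$ acts computably. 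To compute the first $k$ bits of $s\,y$ for $y \in X$, we search for a level $m$ at which every word $u$ of length $k$, other than one, has been certified to satisfy
\[ R \cap \{x : x(1_\Gamma) \in [y_{<m}],\; x(s) \in [u]\} = \emptyset, \]
again using compactness to certify emptiness. The search terminates because the graph of $s$ is closed and $R$ is compact, so by a uniform continuity argument only one extension survives at a finite level.
\end{enumerate}
Inverses $s^{-1}$ are computed in the same way. Arbitrary group elements are compositions of generators, so every element of $\Gamma$ acts by a computable homeomorphism.

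\emph{Independence of the generating set.} Let $S'$ be another finite generating set. Each $s' \in S'$ is a fixed word in $S^{\pm 1}$, so $s'$ is computable once the generators in $S$ are. Therefore the action is effective, and by the first implication the set representation for $S'$ is effectively closed.

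The main obstacle is the compactness and uniform-continuity step in item~2 of the second implication. It is also where the relativized version (\cite{BaSaSa26}, Remark~2.7) requires care about which relators are known.
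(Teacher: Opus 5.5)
The paper gives no proof of this lemma; it only quotes it from \cite{BaSaSa26}, so there is no in-paper argument to compare yours against. Your proof is correct, and it is the standard argument behind such a remark. The first implication uses that the graph of a computable map on a $\Pi^0_1$ set is $\Pi^0_1$. The converse uses that a continuous map on an effectively compact set whose graph is effectively closed is computable: you search for a finite stage at which all output prefixes but one have been certified impossible.

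There are a few small points of precision.
\begin{itemize}
\item In the first implication, the machine for $s$ is only guaranteed to behave correctly on oracles in $X$. So your ``exactly when'' description of the complement of $\{x : x(s) = s\,x(1_\Gamma)\}$ holds only on $\{x : x(1_\Gamma) \in X\}$. This is harmless: the enumerated disagreement set is still c.e.\ open, and you intersect with that set anyway.
\item In the second implication, termination is most cleanly justified by uniform continuity of $s$ on the compact set $X$. For $m$ large, every $x \in R$ with $x(1_\Gamma) \in [y_{<m}]$ satisfies $x(1_\Gamma) \in X$, hence $x(s)_{<k} = (sy)_{<k}$. The correct prefix can never be certified empty, because the point $x \in R$ with $x(1_\Gamma) = y$ witnesses it.
\item Your closing remark about relators is off target. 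Relators play no role in this lemma. The relativized statement in the paper (the lemma on relative set representations) is obtained by applying this lemma to the free group $F_S$, whose word problem is decidable.
\end{itemize}
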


Let now $Y \subset \Omega^\omega$ be the set representation of $\Gamma \curvearrowright X$ for the generating set $S$. Recall that $\Omega = \{0,1\}^S$ and $Y = \{y \in \Omega^\omega \;|\; (y_n (1_\Gamma))_{n \in \N} \in X \mbox{ and } \forall s \in S, (y_n (s))_{n \in \N} = s (y_n (1_\Gamma))_{n \in \N}\}$.

Since $\Gamma \curvearrowright Y$ 
is effectively closed, there exists a Turing machine $\mathcal M$ that recognizes the forbidden patterns of $Y$. Alternatively, the computation of $\mathcal M$ on input $y \in (\Omega^S)^\omega$ terminates if and only if $y \notin Y$.

When $\Gamma$ does not have decidable word problem, we will instead have an effective action of the free group $(F_S, Z)$ with free generators $S$ on some $Z \subset \{0,1\}^\Z$ (in other words, $Z$ is effectively closed, and to each $S$ we associate a self-homeomorphism of $Z$), and $(\Gamma, X) \cong (\Gamma, Z')$ where $Z' \subset Z \subset \{0,1\}^\omega$ is the subset where the action of $\Gamma$ is well-defined, i.e.\ all relators of $\Gamma$ (w.r.t.\ the generating set $S$) act trivially.

Since the group $F_S$ certainly has decidable word problem, we can apply the set representation to this case. This gives the following lemma.

\begin{lemma}
\label{lem:RelativeSetRepresentation}
If $(\Gamma, Z)$ is a relatively effective system, there is an effectively closed action of $F_S$ on $X \subset \{0,1\}^\omega$ such that, letting $Y \subset (\{0,1\}^S)^\omega$ be the set representation of that action and $\pi : (\{0,1\}^S)^\omega \to (\{0,1\}^\omega)^S$ the natural bijection (transposition), we have that
\[ \{ z \in (\{0,1\}^\omega)^\Gamma \;|\; \forall g \in \Gamma: \pi(gz|_S) \in Y \} \]
under the shift action of $\Gamma$ factors onto $(\Gamma, Z)$ by $z \mapsto z|_{\textrm{id}_\Gamma}$.
\end{lemma}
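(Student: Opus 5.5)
Lemma~\ref{lem:RelativeSetRepresentation} follows by unwinding the definitions. First, build the free group action. By definition of relative effectiveness, there is an effective action $(F_S, Z_0)$ with $Z_0 \subset \{0,1\}^\omega$ effectively closed. There is also a conjugacy between $(\Gamma, Z)$ and $(\Gamma, Z_0')$, where $Z_0'$ is the set of points fixed by every relator of $\Gamma$. I will replace $(\Gamma,Z)$ by $(\Gamma,Z_0')$ and take $X := Z_0$ with its $F_S$-action. This is the effectively closed $F_S$-action the statement asks for. By the lemma from \cite{BaSaSa26} (Remark 2.7), applied to the group $F_S$ with generating set $S$, its set representation $Y \subset (\{0,1\}^S)^\omega$ is effectively closed. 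Through the transposition $\pi$, $Y$ is exactly the set of tuples $(x_s)_{s\in S} \in (\{0,1\}^\omega)^S$ with $x_{1} \in X$ and $x_s = s\cdot x_1$. Here I implicitly index by $S \cup \{1\}$, as in Definition~\ref{def:SetRepresentation}, or assume $1 \in S$.

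Next, I will identify the subshift $W = \{ z \;|\; \forall g: \pi(gz|_S) \in Y\}$. Unfolding the shift, $(gz)(s) = z(g^{-1}s)$. So the condition says that for every $h \in \Gamma$ we have $z(h) \in X$ and $z(hs) = s^{-1}\cdot z(h)$ or $s\cdot z(h)$, according to the convention fixed in Definition~\ref{def:SetRepresentation}. I will fix the convention so that $z(hs) = s^{-1} z(h)$, which matches the left shift action. By induction along words, $z(hw) = w^{-1}\cdot z(h)$ for every word $w \in F_S$, where the left side evaluates $w$ in $\Gamma$. Taking $w$ to be a relator $r$ gives $z(h) = r^{-1}\cdot z(h)$ for all relators $r$, so $z(h) \in Z_0'$. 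In particular the map $\phi: z \mapsto z(1_\Gamma)$ lands in $Z_0'$.

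Then I will check that $\phi$ is a factor map. Continuity is clear, since $\phi$ is a coordinate projection. For equivariance, $(gz)(1) = z(g^{-1})$. Writing $g^{-1}$ as a word $w$ gives $z(g^{-1}) = w^{-1}\cdot z(1) = g\cdot z(1)$, which is well defined on $Z_0'$ because relators act trivially there. For surjectivity, given $x \in Z_0'$, define $z(h) = h^{-1}\cdot x$ using the honest $\Gamma$-action on $Z_0'$. Then $z(hs) = s^{-1} z(h)$, so $z \in W$ and $\phi(z) = x$. Composing $\phi$ with the conjugacy $Z_0' \cong Z$ finishes the argument.

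The only real obstacle is bookkeeping. The left/right conventions must be aligned between the shift action, the set representation (whether coordinate $s$ stores $s x$ or $s^{-1}x$), and the transposition $\pi$, so that the local constraint really propagates to $z(hw) = w^{-1} z(h)$. If the convention comes out inverted, I will replace $S$ by $S^{-1}$, or compose the $F_S$-action with inversion of generators. Either change preserves effective closedness, so it does not affect the lemma.
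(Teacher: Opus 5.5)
Your proposal is correct and is essentially the paper's argument: the paper applies the set representation to the effective $F_S$-action (free groups have decidable word problem) and leaves implicit the unwinding you carry out. One sharpening: with the paper's Definition~\ref{def:SetRepresentation} (coordinate $s$ stores $s\cdot x$) and its shift convention, the constraint comes out as $z(hs)=s\cdot z(h)$, so you must actually take the twisted effective action $s\star x=s^{-1}\cdot x$ on $X=Z_0$ — replacing $S$ by $S^{-1}$ is not available, since $S$ is fixed in the statement — after which your relator, equivariance and surjectivity computations go through verbatim.
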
 

Thus, the proof of the main theorem amounts to checking that we can assign a point from $X$ to each $g \in \Gamma$, and to verify the consistency requirement at each $gz|_S \sim z|{g^{-1}S}$ (where $\sim$ denotes that the patterns are in canonical bijection). Due to our shift convention, if $z$ is encoded at the identity element $\textrm{id}$, the point $gz$ should be encoded at the element $g^{-1}$.

\subsection{The computation subshift}

Let $(F_S, X)$ be the system from the previous section, so that $(F_S, X')$ (seen as a $\Gamma$-system) factors onto $(\Gamma, Z)$. Let $Y \subset (\{0,1\}^S)^\omega$ be the set representation of $(F_S, X)$. Alternatively, if $\Gamma$ has decidable word problem, one may take $Y$ directly the set representation of a $\Gamma$-system $(\Gamma, X)$. 

For $W$ a Wang tileset containing a specific symbol $\bseed \in W$, consider the map $\eta$ which associates to any valid tiling of the quarter plane $\Z^2$ with symbol $\bseed$ at $(0,0)$ the contents of the bottom row starting at $(0,0)$. That is, $\eta(\tau) = \tau|_{\{(n,0) \;|\; n \geq 1\}}$. 

It is known that there exists a tileset $W$ such that $\eta$ surjects valid tilings of $\Z^2$ with $\bseed$ at $(0,0)$ onto inputs on which $\mathcal M$ does not terminate \cite{Wa60}. In other words, if we write
\[ Y = \{ y \in (\{0,1\}^S)^\omega \;|\; (y_n(1_{F_S}))_{n \in \N} \in X \mbox{ and } \forall s \in S, (y_n(s))_{n \in \N} = s (y_n(1_{F_S}))_{n \in \N}\}, \]
then we have $W \supset \Omega$ and
\[ \eta (\{\mbox{valid tilings }\tau : \Z^2 \to W \;|\; \tau(0,0) = \bseed \}) = Y. \]

 Now define the computation subshift $\mathcal Z$ on alphabet $\Sigma \times 2^V \times 2^{\cE} \times 2^{\cV \times 2^V} \times (\Omega \cup \{ \openbox \})^{E \times 2^V} \times 2^{\cE} \times W^{\cE}$ by demanding the following conditions of every \ $\bS = (\rho,\bB,\bD,\bI,\bL,\bP,\bT) \in \mathcal Z$ at every $g \in \Gamma$. 
\begin{enumerate}
	\item $(\rho,\bB,\bD,\bI,\bL) \in \mathcal S$.
	\item $\exists ! e_g = \{u,v\} \subset E \cap \bB(g), ((u,\overline{c_u (g)}),(v,\overline{c_v (g)}))\in \bP(g)$.
	\item If $e = ((u,c_u),(v,c_v)) \in \bP(g)$ then $e \in \bP(g \ell_u^{c_u} (g))$ and $e \in \bP(g \ell_v^{c_v} (g))$. 
	\item $\bT(g)(((u,\overline{c_u (g)}),(v,\overline{c_v (g)}))) = \bseed.$
	\item If $e = ((u,c_u),(v,c_v)) \in \bP(g)$ and $((u,c_u),C) \in \bI(g)$ and $\bT(g)(e) \neq \bseed$ then $\bL(g)(\{u,v\},C) = \bT(g)(e)$. 
	\item If $e = ((u,c_u),(v,c_v)) \in \bP(g)$ then
	
	 $(\bT(g)(e), \bT(g \ell_u^{c_u} (g))(e), \bT(g \ell_v^{c_v} (g))(e), \bT(g \ell_u^{c_u} (g)^{-1})(e), \bT(g \ell_v^{c_v} (g)^{-1})(e))$\\ is a valid pattern of $W$.
\end{enumerate}
$\mathcal Z$ is clearly an SFT. Also define 
\begin{align*}
 \beta : \mathcal Z &\to \{0,1\}^{\N}\\
  (\rho,\bB,\vartextvisiblespace,\vartextvisiblespace,\bL,\vartextvisiblespace,\vartextvisiblespace) &\mapsto (\bL(\gamma_{1_\Gamma}^v (n+1,\rho))(e_{1_\Gamma},\bB(1_\Gamma))(1_\Gamma))_{n \in \N} \mbox{ where } v \in \bB(1_\Gamma). 
\end{align*}

The following claim proves that the choice of $v \in \bB(1_\Gamma)$ is inconsequential.\
\begin{claim}
\label{claim:Indifferent}
For every $u,v \in \bB(1_\Gamma)$, and for every $(\rho,\bB,\vartextvisiblespace,\vartextvisiblespace,\bL,\vartextvisiblespace,\vartextvisiblespace) \in \mathcal{Z}$,\\ $(\bL(\gamma_{1_\Gamma}^v (n+1,\rho))(e_{1_\Gamma},\bB(1_\Gamma))(1_\Gamma))_{n \in \N} = (\bL(\gamma_{1_\Gamma}^u (n+1,\rho))(e_{1_\Gamma},\bB(1_\Gamma))(1_\Gamma))_{n \in \N}. $
\end{claim}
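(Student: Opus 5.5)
The plan is to use the synchronization rules 4, 5 and 8 of the bush subshift $\mathcal S$. These make the $\bL$-layer of index $(\{u,v\},\bB(1_\Gamma))$ constant along the ``diagonals'' of every plane spanned by two adjacent directions of the bush. Connectedness of $\bB(1_\Gamma)$ (rule 2) then lets us propagate equality from one direction to any other.

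\emph{Reduction to adjacent directions.} Since the induced subgraph on $\bB(1_\Gamma)$ is connected, there is a path $u = w_0, w_1, \dots, w_k = v$ of nodes of $\bB(1_\Gamma)$ with $\{w_{i},w_{i+1}\} \in E$. By transitivity, it suffices to prove the equality when $\{u,v\} \in E$ and $u,v \in \bB(1_\Gamma)$.

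\emph{The adjacent case.} By rule 4, the colored edge $e = ((u,\overline{c_u(1_\Gamma)}),(v,\overline{c_v(1_\Gamma)}))$ lies in $\bD(1_\Gamma)$. By rule 5 and induction, $e$ lies in $\bD(h)$ for every $h = \gamma^u_{1_\Gamma}(n,\rho)\gamma^v_{\gamma^u_{1_\Gamma}(n,\rho)}(m,\rho)$ in the ``quarter plane'' spanned by the two paths. Here one must check that the colors of $u$ stay constant when moving in the $v$-direction and vice versa. This holds by rule 6 of the path subshift, since $u$ and $v$ commute. It also uses the observation, already made before Claim~\ref{cl:PathSubshift}, that $\gamma^u$ is unchanged by multiplying by elements of $\Gamma_v$. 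Consequently the $u$-steps and $v$-steps commute, and the quarter plane is genuinely a grid. Rule 8 applied at the grid point $(n,m)$ gives
\[
\bL(\text{point }(n+1,m)) = \bL(\text{point }(n,m+1))
\]
at coordinate $(\{u,v\},\bB(1_\Gamma))$, and evaluating at $1_\Gamma$ preserves this. By induction on $n$, with the sum $n+m$ fixed, the value of $\bL$ is constant along each antidiagonal $n+m = N+1$. In particular the points $(N+1,0)$ and $(0,N+1)$ carry the same value. These points are $\gamma^u_{1_\Gamma}(N+1,\rho)$ and $\gamma^v_{1_\Gamma}(N+1,\rho)$, which gives the equality of the two sequences entry by entry.

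\emph{Handling $e_{1_\Gamma} \ne \{u,v\}$.} The index used in $\beta$ is $e_{1_\Gamma}$, which in general differs from $\{u,v\}$. I expect this bookkeeping to be the main obstacle. For a single pair, the argument above shows that all directions $u'$ with $\{u',v'\} = e_{1_\Gamma}$, that is the two endpoints of $e_{1_\Gamma}$, agree. For other directions, the statement should be read with the layer $\bL(\cdot)(e_{1_\Gamma},\cdot)$ being read along any ray in the bush. The plan is to first transport information between layers indexed by different edges $\{w_i,w_{i+1}\}$ using rule 9 and the $\bI$-markers (rules 6 and 7), which tie each $u$-ray to all layers $\{u,v'\}$ with $v'$ adjacent to $u$. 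Only then would I chain along the path in $\bB(1_\Gamma)$. Concretely, for a node $w$ shared by consecutive edges $\{w_{i-1},w\}$ and $\{w,w_{i+1}\}$, I would use rule 9 with $a$ the identity-adjacent generator to identify the values of the two layers on the $w$-ray. If rule 9 proves too weak for this, I would fall back to proving the claim only for $u,v$ in the edge $e_{1_\Gamma}$, which suffices for the well-definedness of $\beta$ once $v$ is chosen in $e_{1_\Gamma}$.
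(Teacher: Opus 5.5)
Your adjacent-case argument is exactly the paper's proof. You reduce to adjacent $u,v$ by connectedness of $\bB(1_\Gamma)$, push the coloured edge through the quarter plane with rules 4 and 5, and synchronize antidiagonals with rule 8. The paper writes ``condition 6'' but means rule 8, and it leaves implicit your check, via rule 6 of $\cP$, that the $u$- and $v$-steps commute so the quarter plane is a genuine grid. You are also right that this synchronizes only the layer $(\{u,v\},\bB(1_\Gamma))$, not $(e_{1_\Gamma},\bB(1_\Gamma))$. The paper makes this substitution silently. Its reduction to adjacent pairs therefore has exactly the defect you point out: each adjacent step is an equality in a different layer, so the steps do not compose.

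Your proposed repair via rule 9 does not work, for three reasons.
\begin{itemize}
\item Rule 9 equates the $1_\Gamma$-coordinate of $\bL(ga)$ with the $a^{-1}$-coordinate of $\bL(g)$, where $a$ generates a vertex group in $\lk(u)$. It never compares two layers at the same point and coordinate directly.
\item It applies only when $g$ and $ga$ both carry $\bI$-markers for $u$ of the same colour. Rules 6 and 7 do not force these where you need them: the natural source would be the root $a^{-1}$, which requires $w\in\bB(a^{-1})$, and rule 3 does not guarantee this.
\item It only ever constrains layers containing the marked vertex.
\end{itemize}
So at best it moves data from layer $\{w_{i-1},w\}$ to layer $\{w,w_{i+1}\}$ along the $w$-ray. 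That yields a statement in which the layer index changes, and it never constrains layer $e_{1_\Gamma}$ along a $w$-ray with $w\notin e_{1_\Gamma}$.

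In fact the statement as written fails. Take the right-angled Artin group of the $5$-cycle on vertices $1,\dots,5$, with the configuration of Claims~\ref{claim:NonEmpty} and~\ref{claim:Surj}, where $\bB(g)=V\setminus\tail(g)$. Only $1_\Gamma$ has $\bB=V$. Choose $e_{1_\Gamma}=\{1,2\}$. Then the layer $(\{1,2\},V)$ is filled only from the root $1_\Gamma$, hence only inside $\Gamma_1\times\Gamma_2$. It carries $y^{(1_\Gamma)}$ along the $1$-ray but is $\openbox$ at every $\gamma^4_{1_\Gamma}(n+1,\rho)$.

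Your fallback is therefore the correct provable content: the case $u,v\in e_{1_\Gamma}$, which your adjacent-case argument proves completely. It does make $\beta$ well defined once $v$ is taken in $e_{1_\Gamma}$. Be aware that Claim~\ref{cl:Equiv} invokes this claim for a direction $u\in\lk(v)$ that need not lie in $e_{1_\Gamma}$. That step needs a layer-changing version with its own justification.
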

\begin{proof}
  By the second condition of the bush subshift, $\bB(1_\Gamma)$ is connected, and so it suffices to show it for two adjacent vertices. Let us hence assume that $v \in \lk(u)$. 
  
  If $v=u$, the result is trivial. 
  
  If not, by conditions 4 and 5 of the bush subshift, $ \forall n,m \in \N$,$\{u,v\} \in \bD(\gamma_{1_\Gamma}^u (n,\rho) \gamma_{\gamma_{1_\Gamma}^u (n,\rho)}^v (m,\rho))$. But then by condition 6, it is straightforward to show by induction that $\forall n \in \N, \forall k \leq n$,\\
  $\bL(\gamma_{1_\Gamma}^u (n-k,\rho) \gamma_{\gamma_{1_\Gamma}^u (n-k,\rho)}^v (k,\rho))(\{u,v\},\bB(g)) = \bL(\gamma_{1_\Gamma}^u (n,\rho))(\{u,v\},\bB(g))$, and the result follows from the case of $k=n$.
\end{proof}

In the following, we will prove that the action $F_S \curvearrowright X'$ is a topological factor of $\mathcal Z$ through $\beta$.

\begin{claim}
\label{claim:Incl}
$\beta(\mathcal Z) \subset X.$
\end{claim}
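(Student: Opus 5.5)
The plan is to show that the sequence read off by $\beta$ at the identity coincides with the bottom row of a valid Wang tiling seeded at $\bseed$. By the choice of $W$, that row lies in $Y$, and projecting $Y$ to the $1$-coordinate lands in $X$.

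Fix $\bS = (\rho,\bB,\bD,\bI,\bL,\bP,\bT) \in \mathcal Z$. Let $e_{1_\Gamma} = \{u,v\}$ be the unique edge given by rule~2, with colored version $e = ((u,\overline{c_u(1_\Gamma)}),(v,\overline{c_v(1_\Gamma)})) \in \bP(1_\Gamma)$.

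\textbf{Step 1: build the quarter-plane tiling.} Define $\tau : \N^2 \to W$ by
\[ \tau(n,m) = \bT\bigl(\gamma_{1_\Gamma}^u(n,\rho)\,\gamma_{\gamma_{1_\Gamma}^u(n,\rho)}^v(m,\rho)\bigr)(e). \]
By rule~3, $e$ propagates along the $\ell_u$ and $\ell_v$ steps, so $e \in \bP$ at every point of this grid. Rule~6 of the path subshift ensures that moving in direction $u$ does not change $\rho_v$, and conversely, so the two paths commute. The grid is therefore a genuine image of $\N^2$ in which $\ell_u$ and $\ell_v$ move one step horizontally and vertically. Rule~6 of $\mathcal Z$ then says exactly that every cross-shaped neighbourhood is a valid $W$-pattern. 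Rule~4 gives $\tau(0,0) = \bseed$.

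One technicality is that the tiling is defined on a quarter plane rather than on all of $\Z^2$, and the pattern in rule~6 also refers to the inverse steps $\ell^{-1}$. I would handle this by noting that $r_u(g\ell_u(g)) = \ell_u(g)^{-1}$, so the backward neighbours within the quarter plane are the correct ones. Since $W$ is the standard tileset whose seeded quarter-plane tilings project onto $Y$ (as used to state the property of $\eta$), this is exactly what is needed. Hence $\eta(\tau) \in Y$.

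\textbf{Step 2: identify the bottom row with $\bL$.} For $n \geq 1$, the point $h_n = \gamma_{1_\Gamma}^u(n,\rho)$ carries $e \in \bP(h_n)$. By rules~6 and~7 of the bush subshift, it also carries $((u,c_u),\bB(1_\Gamma)) \in \bI(h_n)$. Since $\bT(h_n)(e) \neq \bseed$ (which must also be checked), rule~5 of $\mathcal Z$ gives
\[ \bL(h_n)(\{u,v\},\bB(1_\Gamma)) = \bT(h_n)(e) = \tau(n,0). \]
By Claim~\ref{claim:Indifferent}, $\beta$ may be computed using the direction $u$. Therefore $\beta(\bS)_n$ is the $1_\Gamma$-component of $\tau(n+1,0)$, so $\beta(\bS) = (\eta(\tau)_n(1_\Gamma))_n$. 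This lies in $X$ by the definition of $Y$.

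\textbf{Main obstacle.} The delicate step is Step~1: checking that the indexing convention of the grid (which of $\ell_u$, $\ell_v$ plays space and which plays time, and the colour bookkeeping $\overline{c}$) matches the $5$-tuple in rule~6, so that $\tau$ really is a valid seeded tiling.

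Two further points need care. First, I need to argue that $\bseed$ does not reappear on the bottom row. I would do this by choosing $W$ so that $\bseed$ can only occur at the corner, i.e.\ no tile may have $\bseed$ as its left neighbour. Second, in Step~2 the $u$ in $e_{1_\Gamma}$ must be allowed as the choice of $v$ in $\beta$; this holds because $u \in \bB(1_\Gamma)$.
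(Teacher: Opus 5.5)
Your proposal is correct and is essentially the paper's proof: propagate $e_{1_\Gamma}$ over the $u$--$v$ grid by rule~3, get the seed from rule~4 and local validity from the forward adjacencies in rule~6 (these are all you need, so the backward-step issue is moot), conclude the bottom row lies in $Y$, and use bush rules~6--7 with rule~5 of $\mathcal Z$ to identify that row with what $\beta$ reads; your appeal to Claim~\ref{claim:Indifferent} and to the commutation of the two paths only makes explicit what the paper leaves implicit. One small fix: keeping $\bseed$ off the bottom row needs no change to $W$ beyond taking $\bseed \notin \Omega$, since that row lies in $Y \subset \Omega^\omega$; moreover, as you worded it, forbidding $\bseed$ as a tile's \emph{left} neighbour would forbid the seed from having any right neighbour at all, which is the opposite of what you intend.
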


\begin{proof}
By condition 2 of the computation subshift, there exists 
  
  $e_{1_\Gamma} = ((u,\overline{c_u (g)}),(v,\overline{c_v (g)})) \in \bP(1_\Gamma)$ such that $u,v \in \bB(1_\Gamma)$. But then by condition 3, $\forall n,m \in \N, e_{1_\Gamma} \in \bP(\gamma_{1_\Gamma}^u (n,\rho) \gamma_{\gamma_{1_\Gamma}^u (n,\rho)}^v (m,\rho))$. By condition 4, $\bT(1_\Gamma)(e_{1_\Gamma}) = \bseed$ and by condition 6, $\bT$ defines a valid tiling at every $n,m \in \N$. But any valid $W$-tiling of the quarter plane with $\bseed$ at the origin must have an element of $Y$ as a first row, and so $(\bT(\gamma_{1_\Gamma}^u (n+1, \rho))(e_{1_\Gamma}))_{n \in \N} \in Y$. 
  
  Finally, note that $\forall n \in \N, ((u,\overline{c_u(g)}),\bB(1_\Gamma)) \in \bI(\gamma_{1_\Gamma}^u (n+1, \rho))$ and \\
  $\bT(\gamma_{1_\Gamma}^u (n+1, \rho))(e_{1_\Gamma}) \neq \bseed$. 
  
  Hence, by the 5th condition, \\
  $((\bL(\gamma_{1_\Gamma}^u (n+1,\rho))(e_{1_\Gamma}, \bB(1_\Gamma))(1_\Gamma))_{n \in \N} = ((\bT(\gamma_{1_\Gamma}^u (n+1,\rho))(e_{1_\Gamma})(1_\Gamma))_{n \in \N} \in X.$
  
  On the other hand, since $\mathcal Z$ is a $\Gamma$-subshift, any shift by a relation of $\Gamma$ of course acts trivially, so $\beta(\mathcal{Z}) \subset X'$.
\end{proof}

\begin{claim}
\label{cl:Equiv}
$\beta$ is $F_S$-equivariant.
\end{claim}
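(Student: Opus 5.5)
The plan is to verify equivariance one generator at a time. Since $F_S$ is generated by $S$, and we may assume $S$ is symmetric, it suffices to show $\beta(s\bS) = s\cdot\beta(\bS)$ for every $s\in S$ and every $\bS=(\rho,\bB,\bD,\bI,\bL,\bP,\bT)\in\mathcal Z$. Here $s\cdot$ on the right is the action of $F_S$ on $X$.

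Fix $s\in S_v$ for some $v\in V$ and set $a=s^{-1}$. By the shift convention, the data of $s\bS$ at $1_\Gamma$ is the data of $\bS$ at $a$. Using $\gamma_{h^{-1}}^w(n,\rho)=\gamma_{1_\Gamma}^w(n,h\rho)$, this gives
\[
\beta(s\bS)_n=\bL\bigl(a\,\gamma_a^w(n+1,\rho)\bigr)\bigl(e_a,\bB(a)\bigr)(1_\Gamma)\quad\text{for any } w\in\bB(a).
\]
So $\beta(s\bS)$ is read along the bush at $a$, while $\beta(\bS)$ is read along the bush at $1_\Gamma$.

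\textbf{Step 1 (common direction).} By rule~3 of the bush subshift there is $u\in\bB(1_\Gamma)\cap\bB(a)\cap\lk(v)$. By Claim~\ref{claim:Indifferent}, I may compute both $\beta(\bS)$ and $\beta(s\bS)$ along the direction $u$.

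\textbf{Step 2 (the two rays stay adjacent).} Let $g_n=\gamma_{1_\Gamma}^u(n+1,\rho)$. Since $u\in\lk(v)$, rule~6 of the path subshift gives $\rho_u(h)=\rho_u(ha)$ for all $h$. By induction on $n$, using that $\Gamma_u$ and $\Gamma_v$ commute, this yields $a\,\gamma_a^u(n+1,\rho)=g_n a$, and also $c_u(a)=c_u(1_\Gamma)$. Hence the $u$-ray from $a$ is exactly the $a$-translate (on the right) of the $u$-ray from $1_\Gamma$.

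\textbf{Step 3 (transfer along the rays).} By rules~6 and~7, $((u,\overline{c_u(1_\Gamma)}),\bB(1_\Gamma))\in\bI(g_n)$ for all $n$, and similarly $((u,\overline{c_u(a)}),\bB(a))\in\bI(g_na)$. Rule~9 is applied at $g_n$, with the generator $a\in S_v$ and $v\in\lk(u)$. Its quantification over arbitrary $v$ and $v'$ lets me take the edge $e_{1_\Gamma}$ on one side and $e_a$ on the other. This gives
\[
\bL(g_na)(e_a,\bB(a))(1_\Gamma)=\bL(g_n)(e_{1_\Gamma},\bB(1_\Gamma))(a^{-1}).
\]
Thus $\beta(s\bS)_n$ is the $s$-coordinate of the sequence stored along the $u$-ray at $1_\Gamma$.

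\textbf{Step 4 (use the set representation).} The proof of Claim~\ref{claim:Incl} shows that the full $\Omega$-valued sequence $(\bL(g_n)(e_{1_\Gamma},\bB(1_\Gamma)))_n$ equals the first row of the $W$-tiling, and so lies in $Y$. By the definition of $Y$, its $s$-coordinate row is $s$ applied to its $1_\Gamma$-coordinate row, which is $s\cdot\beta(\bS)$. Combined with Step~3, this gives $\beta(s\bS)=s\cdot\beta(\bS)$.

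The main obstacle I expect is bookkeeping rather than ideas. The key points are Step~2, namely that the path from $a$ coincides with the $a$-translate of the path from $1_\Gamma$ with matching colors, and getting the convention $s$ versus $s^{-1}$ right when matching the coordinate $a^{-1}$ in rule~9 with the action on $Y$. If the sign comes out reversed, I would fix it by replacing $s$ by $s^{-1}$, using the symmetry of $S$.
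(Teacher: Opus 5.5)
Your argument follows the paper's proof of this claim step for step. You take a common direction $u$ from rule~3 and use Claim~\ref{claim:Indifferent} to read along $u$. Commutation plus rule~6 of the path subshift identifies the $u$-ray at $s^{-1}$ with the right translate of the $u$-ray at $1_\Gamma$. Then come propagation of $\bI$ by rules~6--7, rule~9, and finally membership in $Y$. Your Steps~1--2 are more careful than the written proof about $s$ versus $s^{-1}$, and about the colour equality $c_u(a)=c_u(1_\Gamma)$ that rule~9 needs.

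The step that fails as written is the claim in Step~3 that the quantification in rule~9 lets you take $e_{1_\Gamma}$ on one side and $e_a$ on the other. Rule~9 only relates layers $(\{u,v\},C)$ and $(\{u,v'\},C')$ indexed by edges \emph{through $u$}. The edges $e_{1_\Gamma}$ and $e_a$ are chosen by $\bP$ independently of $u$, and in general no admissible $u$ lies on them. For example, take the $5$-cycle with vertices $0,1,2,3,4$ in cyclic order, $e_{1_\Gamma}=\{0,1\}$ (allowed, since $\bB(1_\Gamma)=V$ in the configuration of Claim~\ref{claim:NonEmpty}) and $s\in S_3$. Every direction usable in rule~9 then lies in $\lk(3)=\{2,4\}$. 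Step~4 has the same defect. The proof of $\beta(\mathcal Z)\subset X$ shows that layer $e_{1_\Gamma}$ carries a point of $Y$ along the ray of an endpoint of $e_{1_\Gamma}$, not along $g_n=\gamma^u_{1_\Gamma}(n+1,\rho)$. Claim~\ref{claim:Indifferent} as stated only transports the $1_\Gamma$-coordinate, whereas you need the $s$-coordinate.

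The paper's own proof makes exactly the same identification: it applies rule~9 to $e_{1_\Gamma}$ and $e_{s^{-1}}$ and cites Claim~\ref{claim:Indifferent} at coordinate $s$. So this is not a departure from the paper. Still, with the rules as stated nothing ties the tiling-checked layer $e_{1_\Gamma}$ to the layers through $u$. Rule~8 synchronizes each layer $\{p,q\}$ only along its own $\{p,q\}$-plane. Rule~9 links layers through a common vertex only coordinatewise ($a^{-1}$ at $g$ against $1_\Gamma$ at $ga$).

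What is missing is a within-bush cross-layer synchronization. One option is a rule demanding $\bL(g)(\{u,x\},C)=\bL(g)(\{u,x'\},C)$ whenever $((u,c_u),C)\in\bI(g)$. The configuration of Claim~\ref{claim:NonEmpty} is compatible with this, since there every layer through $u$ carries $y^{(g)}_{n-1}$ at $g\gamma^u_g(n,\rho)$. With such a rule, rule~8 and connectedness of $\bB(1_\Gamma)$ show that every layer $\{p,q\}\subset\bB(1_\Gamma)$, read along one of its endpoint rays, carries the same $\Omega$-sequence. Your Steps~3 and~4 then go through with edges $\{u,x\}$, $\{u,x'\}$ through $u$ in place of $e_{1_\Gamma}$ and $e_a$.
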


\begin{proof}
  Let $v \in V, s \in S_v \setminus \{F_S\}$. 
  By condition 2 of the bush subshift, there exists $u \in \bB(1_\Gamma) \cap \bB(s) \cap \lk(v)$. Then, conditions 7 and 8 ensure that $\forall n \in \N: u \in \bI(\gamma_{1_\Gamma}^v (n, \rho))$. By condition 9 since $v \in \lk(u)$,\\
  $ (\bL(\gamma_{1_\Gamma}^u (n, \rho) s)(e,\bB(s))(1_\Gamma))_{n \in \N} = (\bL(\gamma_{1_\Gamma}^u (n, \rho))(f,\bB(1_\Gamma))(s^{-1})))_{n \in \N}$ every time $v \in e \subset \bB(1_\Gamma)$ and $v \in f \subset \bB(s)$. Hence, 
  \begin{align*}
   	s \beta((\rho,\bB,\bD,\bI,\bL,\bP,\bT)) &= s(\bL(\gamma_{1_{F_S}}^v (n+1, \rho))(e_{1_\Gamma},\bB(1_\Gamma))(1_\Gamma))_{n \in \N}\\
  	&= (\bL(\gamma_{1_\Gamma}^v (n+1, \rho))(e_{1_\Gamma},\bB(1_\Gamma))(s))_{n \in \N} \mbox{ by Claim }~\ref{claim:Incl}.\\
  	&= (\bL(\gamma_{1_\Gamma}^u (n+1, \rho))(e_{1_\Gamma},\bB(1_\Gamma))(s))_{n \in \N} \mbox{ by Claim }~\ref{claim:Indifferent}.\\
  	&= (\bL(\gamma_{1_\Gamma}^u (n+1, \rho) s^{-1})(e_{s^{-1}},\bB(s^{-1}))(1_\Gamma))_{n \in \N}\\
	&=(\bL(s^{-1}\gamma_{1_\Gamma}^u (n+1, \rho))(e_{s^{-1}},s\bB(1_\Gamma))(1_\Gamma))_{n \in \N} \mbox{ as } s \mbox{ commutes with } \Gamma_u.\\
	&= (s\bL)(\gamma_{1_\Gamma}^u(n+1,s\rho))(e_{s^{-1}},s\bB(1_\Gamma))(1_\Gamma))_{n \in \N}\\
  	&= \beta((s\rho, s \bB,s \bD,s \bI, s \bL, s \bP, s \bT)). 
\end{align*}
  This concludes the proof as $S$ generates $F_S$.
\end{proof}

\begin{claim}
\label{claim:Incl}
$\beta(\mathcal Z) \subset X'.$
\end{claim}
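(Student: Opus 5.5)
We need to show $\beta(\mathcal Z) \subset X'$, where $X' \subset X$ is the set of points fixed by every relator of $\Gamma$ in the free-group action $F_S \curvearrowright X$. By the previous claim we already know $\beta(\mathcal Z) \subset X$. So it remains to show that for every relator $w = s_1 \cdots s_k \in F_S$ that is trivial in $\Gamma$ and every $\bS \in \mathcal Z$, we have $w\,\beta(\bS) = \beta(\bS)$.

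The plan is to combine equivariance with the fact that $\mathcal Z$ is a $\Gamma$-subshift.

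First, apply Claim~\ref{cl:Equiv} letter by letter. It gives $w\,\beta(\bS) = \beta(\tilde w \bS)$, where $\tilde w \bS$ is obtained by successively shifting $\bS$ by the images in $\Gamma$ of $s_k, \dots, s_1$. The only subtlety is bookkeeping of the direction, because of the left-action convention $(g,c) \mapsto c(g^{-1}\cdot)$. The equivariance claim is stated for each generator $s$ acting on $\mathcal Z$ through its image in $\Gamma$. Iterating it therefore shows that $F_S$ acts on $\mathcal Z$ via the quotient map $F_S \to \Gamma$, and that $\beta$ intertwines this action with $F_S \curvearrowright X$.

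Second, since $w$ maps to $1_\Gamma$, the composite shift $\tilde w$ is the identity on $\mathcal Z$. Hence $w\,\beta(\bS) = \beta(\bS)$ for every relator $w$, so $\beta(\bS)$ is stabilized by all relators and lies in $X'$.

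The main obstacle is making sure Claim~\ref{cl:Equiv} genuinely gives equivariance for the $F_S$-action on $\mathcal Z$ defined through the quotient map, rather than a statement about some lifted object. I would check this against how $\beta$ is defined. $\beta$ reads the $\bL$-layer along paths $\gamma^v_{1_\Gamma}(\cdot,\rho)$, and both the path and the layer are determined by the configuration $\bS$, a function on $\Gamma$. So $\beta(\bS)$ depends only on $\bS$ as a $\Gamma$-configuration, and the composite shift by the image of $w$ is literally the identity element of $\Gamma$ acting on it. I would also note that we never need the action of $w$ on $X$ to be well-defined through $\Gamma$ beforehand: the identity $w\,\beta(\bS) = \beta(\bS)$ is exactly what places $\beta(\bS)$ in $X'$.
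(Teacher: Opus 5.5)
Your proposal is correct and is essentially the paper's own argument: because $\mathcal Z$ is a $\Gamma$-subshift, every relator of $\Gamma$ acts trivially on it, and the $F_S$-equivariance of $\beta$ from Claim~\ref{cl:Equiv} transfers this to the image. You also explicitly invoke the earlier inclusion $\beta(\mathcal Z) \subset X$, which the paper's one-line proof uses without stating it.
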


\begin{proof}
Since $\mathcal{Z}$ is a $\Gamma$-subshift, relators of $\Gamma$ of course act trivially on $\mathcal{Z}$, so by $F_S$-invariance of $\beta$, the image is in $X'$.
\end{proof}

\begin{claim}
\label{claim:Surj}
If $G$ is atomic, then $\beta : \mathcal Z \to X'$ is surjective.
\end{claim}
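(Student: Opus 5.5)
Given $x' \in X'$, the plan is to build a configuration $\bS \in \mathcal Z$ with $\beta(\bS) = x'$. The construction in the proof of Claim~\ref{claim:NonEmpty} already does most of the work, and I will reuse it.

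\emph{Choice of points.} Take $\rho \in \cP$ as in Claim~\ref{cl:PathSubshift}, so that all paths $n \mapsto \gamma_g^v(n,\rho)$ are injective. Set $\bB(g) = (V\setminus \tail(g)) \cup \{B_j \;|\; j \in J\}$. Via the factor map of Lemma~\ref{lem:RelativeSetRepresentation}, fix a point $z$ with $z|_{1_\Gamma} = x'$. For each $g \in \Gamma$, let $y^{(g)} \in Y$ be the set-representation point whose $1_\Gamma$-row is the point encoded at $g$, namely $g^{-1}x'$ with the paper's shift convention, and whose $s$-rows are its images under the generators. Since $x' \in X'$, the group $\Gamma$ acts well on the orbit, so $y^{(g)}$ depends only on $g \in \Gamma$ and not on a word in $F_S$. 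Then define $\bD, \bI, \bL$ exactly as in Claim~\ref{claim:NonEmpty}, using these $y^{(g)}$. That claim shows $(\rho,\bB,\bD,\bI,\bL) \in \mathcal S$, and its argument did not depend on the particular choice of the $y^{(g)}$, only on the consistency $y^{(ga)}_n(1_\Gamma) = y^{(g)}_n(a)$ used in rule~9.

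\emph{Computation layer.} For each $g$, choose an edge $e_g = \{u,v\} \subset \bB(g)$. This exists because $\bB(g)$ is connected with at least two nodes, by Lemma~\ref{lem:GraphTheory}. Put the colored edge $((u,\overline{c_u(g)}),(v,\overline{c_v(g)}))$ into $\bP(h)$ for every $h = g\gamma_g^u(n,\rho)\gamma^v_{g\gamma_g^u(n,\rho)}(m,\rho)$. On the same grid, write $\bT(h)(e) = \tau_g(n,m)$, where $\tau_g$ is a valid $W$-tiling of the quarter plane with $\bseed$ at the origin and first row $y^{(g)}$. Such a tiling exists by the surjectivity of $\eta$ onto $Y$. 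Rules~2--6 of $\mathcal Z$ are then immediate from the construction, rule~5 being exactly the matching of the first row with $\bL$, except for well-definedness and uniqueness.

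\emph{Main obstacle: no collisions.} Two different $g, g'$ with the same bush type $\bB(g)=\bB(g')$ and the same colored edge must never write on a common cell. The argument is the one used for the well-definedness of $\bL$ in Claim~\ref{claim:NonEmpty}. If the edge contains a non-amenable vertex, the paradoxical colouring makes the $\br/\bc$-paths from distinct roots disjoint, so injectivity of $(g,n)\mapsto g\gamma^j_g(n,\rho)$ forces $g=g'$. If both endpoints are amenable, then $\tail$ at any point of the grid, other than the root, contains an element of $\bB(g)\cap\{A_i\}$. Hence the root is recovered as the unique point of the coset whose $\tail$ avoids $\bB(g)\cap\{A_i\}$, which again gives $g=g'$. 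To be safe, I would make $e_g$ a deterministic function of $\bB(g)$, so that the uniqueness in rule~2 is unproblematic. Indices $\bP, \bT$ are implicitly also layered by the bush type via $\bI$, so that different bush types do not interfere.

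\emph{Conclusion.} With these choices, $\beta(\bS)$ reads $\bL$ along $\gamma^v_{1_\Gamma}$ at the root $1_\Gamma$, which by definition is $(y^{(1_\Gamma)}_n(1_\Gamma))_n = x'$. Hence $\beta$ is surjective.
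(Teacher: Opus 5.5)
Your proposal follows the paper's proof essentially step for step: you reuse $\rho,\bB,\bD,\bI,\bL$ from Claim~\ref{claim:NonEmpty} with $y^{(g)}$ encoding $\hat g^{-1}x$, choose one edge $e_g \subset \bB(g)$ and spread it over the grid rooted at $g$ in $\bP$, and fill $\bT$ on that grid with a seeded valid $W$-tiling whose first row is $y^{(g)}$. You then invoke the same root-recovery argument as for the well-definedness of $\bL$ and read off $\beta = x$. The only inaccuracy is your remark that $\bP$ and $\bT$ are layered by bush type (they take values in $2^{\cE}$ and $W^{\cE}$), but this is not needed, because your root-recovery argument never uses $\bB(g)=\bB(g')$: amenable endpoints of the coloured edge lie outside both $\tail(g)$ and $\tail(g')$, and non-amenable endpoints are separated by injectivity of the paths and the colours, so distinct roots with the same coloured edge have disjoint grids whatever their bush types (and making $e_g$ depend only on $\bB(g)$ is likewise harmless but unnecessary).
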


\begin{proof}
Let $x \in X'$, and for all $g \in \Gamma$, let $y^{(g)} \in Y$ be such that $(y_n^{(g)}(1_{F_S}))_{n \in \N} = \hat g^{-1} x$ where $\hat g \in F_S$ is any element that evaluates to $g$ in $\Gamma$. Note that since $x \in X'$, $\hat g^{-1}x$ does not depend on the choice of $\hat g$. Define $\rho, \bB, \bD,\bI$ and $\bL$ as in Claim~\ref{claim:NonEmpty}. 
  
  Now, for every $g \in \Gamma$, choose one edge $\{u_g,v_g\} \subset \bB(g)$, and set $e_g = ((a_g,\overline{c_{a_g} (g)}),(v,\overline{c_{b_g} (g)}))$. 
  
  Define $\bP$ by $\forall e = ((u,c_u),(v,c_v)) \in \cE, e \in \bP(g) \mbox{ if and only if } \exists n,m \in \N, \exists h \in \Gamma \mbox{ such that } g = h \gamma_h^u (n,\rho) \gamma_{h \gamma_h^u (n,\rho)}^v (m,\rho)$ and $e_h = e$.
  
  Also define $\bT(g)(e_g) = \bseed$ and $\forall n\in \N,\bT(g \gamma_g^{u_g}(n+1,\rho))(e_g) = y^{(g)}_{n}$. Extend $\bT$ so that $\bT(g \gamma_g^{u_g} (n,\rho) \gamma_{g \gamma_g^{u_g} (n,\rho)}^{v_g} (m,\rho))(e_g)$ is defined by using the compatibility conditions of $W$. This definition is justified by the same remark as the definition of $\bL$ in Claim~\ref{claim:NonEmpty}. Now extend $\bT$ to every layer on every point to satisfy condition 5. 
  
  Note that this condition only applies to elements of the form $g_0 \gamma_{g_0}^u (n,\rho)$ for some $n>1$. Finally, extend arbitrarily $\bT$ to every layer on every point. 
  
  We claim that $(\rho,\bB,\bD,\bI,\bL,\bP,\bT) \in \mathcal Z$.
  \begin{enumerate}
  \item This follows from Claim~\ref{claim:NonEmpty}.
  \item This follows from the definition of $\bP$ with $h = g, n=m=0.$
  \item This follows from the definition of $\bP$ with $g = g_0 \gamma_{g_0}^u (n,\rho) \gamma_{g_0 \gamma_{g_0}^u (n,\rho)}^v (m,\rho)$, as then $g \ell_u^{c_u} (g) = g_0 \gamma_{g_0}^u (n+1,\rho) \gamma_{g_0 \gamma_{g_0}^u (n,\rho)}^v (m,\rho)$, and\\
  $g \ell_b^{c_v} (g) = g_0 \gamma_{g_0}^u (n,\rho) \gamma_{g_0 \gamma_{g_0}^u (n,\rho)}^v (m+1,\rho)$.
  \item This is part of the definition of $\bT$.
  \item This follows from the definition of $\bP,\bL$ and $\bT$ when $g = g_0 \gamma_{g_0}^u (n,\rho)$, as then $\bL(g) (\{u,v\})(C) = \bL (g)(\{u,v\}) (\bB(g_0)) = y^{(g)}_{n-1} = \bT(g)(e_{g_0}) =  \bT(g)(e)$. But $\bI$ is empty outside of these, and so the condition is verified everywhere.
  \item This is part of the definition on every $g_0 \gamma_{g_0}^{u_{g_0}} (n,\rho) \gamma_{g_0 \gamma_{g_0}^{u_g} (n,\rho)}^{v_{g_0} }(m,\rho)$. But $\bP$ is empty outside of these, and so the condition is verified everywhere.
  \end{enumerate}
But finally, from the definition of $\bL$ it is clear that $\beta((\rho,\bB,\bD,\bI,\bL,\bP,\bT)) = x$, and so $\beta$ is surjective.
\end{proof}

Theorem~\ref{th:GraphProducts} is now clear.

\begin{proof}[Proof of Theorem~\ref{th:GraphProducts}]
Let $G$ be a graph that is not a clique or contains at least two non-amenable vertices.

  If $G$ has no disconnecting amenable clique, then it is atomic and so for any effectively closed action $\Gamma(G) \curvearrowright X \subset \{0,1\}^\omega$, Claims~\ref{claim:Incl},~\ref{cl:Equiv} and~\ref{claim:Surj} show that $\Gamma \curvearrowright X$ is a topological factor of the subshift of finite type $\mathcal Z$ through $\beta$.
  
  Conversely, if $G$ has a disconnecting amenable clique, then by Corollary~\ref{cor:AmenableClique}, it is not self-simulable.
\end{proof}

We then obtain Theorem~\ref{th:RAAGs} as a corollary of Theorem~\ref{th:GraphProducts}.

\begin{proof}[Proof of Theorem~\ref{th:RAAGs}.]
	This is an immediate consequence of Theorem~\ref{th:GraphProducts} in the case where every vertex is infinite and amenable.
\end{proof}


\section{Proof of Proposition~\ref{prop:CliqueGeometryThing}}
\label{sec:CliqueGeometryThing}

\begin{repproposition}{prop:CliqueGeometryThing}
Let $G$ be a clique of finitely-generated groups, with a single non-amenable vertex $\Delta$. Then $\Gamma(G)$ splits non-trivially over an amenable subgroup if and only if $\Delta$ does.  
\end{repproposition}

\begin{proof}
In this case, there is an amenable group $\Gamma$ such that $\Gamma(G) = \Gamma \times \Delta$, with all groups finitely generated. 

If $\Delta$ splits nontrivially over an amenable subgroup, then there is an action of $\Delta$ on a tree with no globally fixed vertices, no edge inversions, and all edge stabilizers amenable. Then the action of $\Gamma \times \Delta$ where $\Gamma$ acts trivially has the same properties.

Conversely, suppose $\Gamma \times \Delta$ splits nontrivially over an amenable group, so it admits an action on a tree $T$ which does not have any globally fixed vertex, has no edge inversions, and edge stabilizers are amenable.

If the action of $\Delta$ has no global fixed point, then the subaction of $\Delta$ on $T$ acts on the same tree has all the same properties as the original action, so $\Delta$ splits nontrivially over an amenable group.

Suppose then that the action of $\Delta$ has a global fixed point. Let $T_1$ be the set of its fixed points. Then $T_1$ is itself a subtree of $T$. If $T_1$ is not a single vertex, then $\Delta$ fixes an edge of this tree, which is impossible since edge stabilizers were assumed to be amenable. So $T_1$ has a single vertex.

Since our group $\Gamma(G)$ is a direct product of $\Gamma$ and $\Delta$, $T_1$ has to be fixed as a set by the amenable group $\Gamma$ as well, since if $g \in \Gamma$, $\Delta = \Delta^g$ has fixed points $g^{-1}T_1$. So the unique vertex of $T_1$ is also fixed by $\Delta$, meaning the original action has a globally fixed vertex, a contradiction.
\end{proof}

\begin{remark}
Note that in this proof, the splitting is of the same kind, so $\Gamma \times \Delta$ splits as an HNN extension over an amenable group (resp.\ as an amalgamated free product over an amenable group) if and only if the direct product does.
\end{remark}

\section{Questions}

Our self-simulability construction requires the node groups $G_u$ to be infinite. In fact, bushes may not be generalized easily to the case of finite groups. Indeed, the graph product of a square of $\Z_3$s is self-simulable because it is a direct product of two non-amenable groups. However, the complement of an edge is another edge, which is finite and hence cannot be used as a bush.

\begin{question}
If we allow finite node groups, when is a graph product self-simulable?
\end{question}

In particular, we can ask the following if every node group is $\Z_2$.

\begin{question}
Which right-angled Coxeter groups are self-simulable?
\end{question}

The characterization is not the same as with RAAGs. Indeed, as we showed in Example~\ref{ex:Cycle}, a cycle of length at least $4$ always defines a self-simulable RAAG. However, for large enough cycles, any two disconnected nodes give a copy of the amenable group $D_{\infty} = \Z_2 * \Z_2$ that disconnects the group. On the other hand, we do not know if the triangular prism graph of Figure~\ref{fig:Toblerone} generates a self-simulable right-angled Coxeter group. Indeed, it has no disconnecting amenable subgraph, but the complement of the red clique generates a finite group, so that the bush method cannot work as is.

If a right-angled Coxeter group is quasi-isometric to a right-angled Artin group, then because both are finitely-presented, \cite[Theorem~1.5]{BaSaSa26} implies that their self-simulability statuses are the same. It seems to be unknown which right-angled Coxeter groups have this property, but partial results are given in \cite{CaDaEdKa25}.

\begin{figure}[ht!]
	\centering
	\begin{tikzpicture}
		\node[shape=circle,fill=black] (A) at (0,0) {};
		\node[shape=circle,fill=black] (B) at (0,1) {};
		\node[shape=circle,fill=black] (C) at (0.85,0.5) {};
		\node[shape=circle,fill=red] (D) at (3,0) {};
		\node[shape=circle,fill=red] (E) at (3,1) {};
		\node[shape=circle,fill=red] (F) at (2.15,0.5) {};
		\path[-] (A) edge node[left] {} (D);
		\path[-] (B) edge node[left] {} (E);
		\path[-] (C) edge node[left] {} (F);
		\path[-] (A) edge node[left] {} (B);
		\path[-] (B) edge node[left] {} (C);
		\path[-] (C) edge node[left] {} (A);
		\path[-] (D) edge node[left] {} (E);
		\path[-] (E) edge node[left] {} (F);
		\path[-] (F) edge node[left] {} (D);
	\end{tikzpicture}
	\caption{Prism graph.}
	\label{fig:Toblerone}
\end{figure}

\bibliographystyle{plain}
\bibliography{bib}{}

\end{document}